\newtheorem{theorem}{\bf Theorem}
\newtheorem{lemma}{\bf Lemma}
\newtheorem{remark}{\bf Remark}
\newcommand{\R}{{\mathbb R}} 
\begin{document}
\title{Robust stability analysis of a simple data-driven model predictive control approach}
\author{Joscha Bongard$^1$, Julian Berberich$^2$, Johannes K\"ohler$^{2,3}$, and Frank Allg\"ower$^2$
\thanks{Funded by Deutsche Forschungsgemeinschaft (DFG, German Research Foundation) under Germany's Excellence Strategy - EXC 2075 - 390740016; grant 468094890; grant AL 316/12-2 - 279734922; and the International Research Training Group Soft Tissue Robotics (GRK 2198/1 - 277536708).
We acknowledge the support by the Stuttgart Center for Simulation Science (SimTech).
The authors thank the International Max Planck Research School for Intelligent Systems (IMPRS-IS) for supporting Julian Berberich.}
\thanks{$^1$Technical University of Munich, Chair of Automatic Control, 85748 Garching (Munich), Germany (e-mail: joscha.bongard@tum.de)}
\thanks{$^2$University of Stuttgart, Institute for Systems Theory and Automatic Control, 70550 Stuttgart, Germany.
(email:$\{$julian.berberich, frank.allgower\}@ist.uni-stuttgart.de)}
\thanks{$^3$Institute for Dynamical Systems and Control, ETH Zurich, ZH-8092, Switzerland (email:jkoehle@ethz.ch)}
}

\IEEEoverridecommandlockouts

\IEEEpubid{\begin{minipage}{\textwidth}\ \\[12pt] \\ \\
\copyright 2021 IEEE. Personal use of this material is permitted. Permission from IEEE must be obtained for all other uses, in any current or future media, including reprinting/republishing this material for advertising or promotional purposes, creating new collective works, for resale or redistribution to servers or lists, or reuse of any copyrighted component of this work in other works.
\end{minipage}}

\maketitle
%
\begin{abstract}
In this paper, we provide a theoretical analysis of closed-loop properties of a simple data-driven model predictive control (MPC) scheme.
The formulation does not involve any terminal ingredients, thus allowing for a simple implementation without (potential) feasibility issues.
The proposed approach relies on an implicit description of linear time-invariant systems based on behavioral systems theory, which only requires one input-output trajectory of an unknown system.
For the nominal case with noise-free data, we prove that the data-driven MPC scheme ensures exponential stability for the closed loop if the prediction horizon is sufficiently long.
Moreover, we analyze the robust data-driven MPC scheme for noisy output measurements for which we prove closed-loop practical exponential stability.
The advantages of the presented approach are illustrated with a numerical example.
\end{abstract}

\begin{IEEEkeywords}
Data-driven control, predictive control for linear systems, uncertain systems, optimal control.
\end{IEEEkeywords}

\section{Introduction}
Model predictive control (MPC) is a powerful modern control technique which relies on repeatedly solving an open-loop optimal control problem~\cite{rawlings2017model}.
Key advantages of MPC if compared to other control methods are its applicability to nonlinear systems, the possibility to include constraints on system variables, and desirable closed-loop guarantees on stability and performance.
For the implementation of MPC, an accurate prediction model is required in order to optimize over possible future system trajectories.
In practice, obtaining an accurate model is often time-consuming and requires expert knowledge which explains the increasing interest in designing controllers directly from data without any model knowledge~\cite{hou2013model}.
However, while data-driven control methods are usually simple as no model knowledge is required for their implementation, they often lack the strong theoretical guarantees associated with model-based approaches which are particularly relevant in safety-critical systems.
To this end, we provide a novel theoretical analysis of closed-loop properties of a simple MPC scheme which uses one noisy input-output data trajectory and no model knowledge for prediction, proving that the scheme guarantees stability and robustness under reasonable assumptions.

\vspace{-12pt}
\subsection*{Related work}
In the behavioral approach to control, is was shown that one input-output trajectory of a linear time-invariant (LTI) system can be used to reproduce any further data trajectory of the same system, provided that the input component is persistently exciting~\cite{Willems05}.
This work has received increasing attention for the development of purely data-driven system analysis and control methods, see, e.g.,~\cite{markovsky2008data,romer2019one,DePersis19,berberich2020design}.
Of particular interest is the idea to use the main result in~\cite{Willems05} to develop a data-driven MPC scheme~\cite{yang2015data,DeePC1}.
Various recent works have explored different extensions and modifications of this idea to improve robustness of the algorithm, reduce the computational complexity, and enhance its practical applicability~\cite{fabiani2020optimal,alpago2020extended,baros2020online,yin2020maximum}.
Further, guarantees on robustness and constraint satisfaction for the corresponding open-loop data-driven optimal control problem are provided in~\cite{DeePC4}.
A first analysis of \emph{closed-loop} properties of a simple data-driven MPC scheme is performed in~\cite{Stability&RobustnessTEC}, proving stability and robustness guarantees for noisy output measurements. 
This work is further refined to robust output constraint satisfaction in~\cite{berberich2020constraints} and to a more flexible and practically applicable tracking formulation in~\cite{berberich2020tracking}.
In these works, closed-loop stability is enforced by including a \emph{terminal equality constraint} in the open-loop optimal control problem that is solved online, which is a well-studied idea from model-based MPC~\cite{rawlings2017model}. 
However, since terminal equality constraints are a strong restriction on the online optimization variables, they can deteriorate robustness properties if compared to MPC without terminal ingredients and the theoretical analysis requires the application of the scheme in a multi-step fashion.
Alternatively, a data-driven MPC scheme based on general terminal ingredients, i.e., terminal cost and terminal set constraint, is proposed in~\cite{berberich2021on} which leads to better closed-loop properties, but the computation of these ingredients complicates the design and is not always applicable.

\vspace{-12pt}
\subsection*{Contribution}
In this paper, we provide a closed-loop analysis of a simple data-driven MPC scheme without any stabilizing terminal ingredients.
First, for the case of noise-free data, we apply results on model-based MPC with positive semidefinite cost~\cite{ImplicitSolutions,ForWantOfALocalCLF} to show that, for a sufficiently long prediction horizon, the closed loop is exponentially stable.
In the more\\

\noindent realistic scenario of noisy data, we consider a scheme similar to the one in~\cite{DeePC1,DeePC4} and prove that, under suitable assumptions, the closed loop is practically exponentially stable w.r.t. the noise level.
Our theoretical analysis relies on novel continuity and robustness arguments of MPC based on the data-driven model of~\cite{Willems05}, whose inaccuracy in the presence of noisy data poses a key challenge.
Since the presented approach does not rely on terminal equality constraints, it has multiple advantages over the existing closed-loop robustness guarantees for data-driven MPC provided in~\cite{Stability&RobustnessTEC} such as improved robustness, better numerical properties, and the application in a one-step MPC scheme.
We illustrate these advantages in a numerical example.

\subsection*{Outline}
The remainder of the paper is structured as follows.
In Section~\ref{sec:prelim}, we introduce some preliminaries regarding extended state-space systems and the data-driven system parametrization based on the Fundamental Lemma.
In Section~\ref{sec:nom}, we provide a data-driven MPC scheme for noise-free data with guaranteed closed-loop stability.
Section~\ref{sec:robust} addresses the problem of robust data-driven MPC for noisy data, where we establish practical exponential stability of the closed loop.
Finally, we apply the MPC to a numerical example in Section~\ref{sec:num} and we conclude the paper in Section~\ref{sec:conclusion}.

\subsection*{Notation}
We write $I_n$ for an $n\times n$-identity matrix and $0_{n\times m}$ for an $n\times m$-zero matrix.
The Euclidean, $\ell_1$-, and $\ell_{\infty}$-norm of a vector $x$ is denoted by $\lVert x\rVert_2$, $\lVert x\rVert_1$, and $\lVert x\rVert_{\infty}$, respectively.
Moreover, the quadratic norm with respect to a positive semidefinite matrix $Q=Q^\top$ is denoted by $\|x\|_Q^2=x^\top Q x$ and the minimum and maximum eigenvalue of $Q$ are denoted by $\lambda_{\min}(Q)$ and $\lambda_{\max}(Q)$, respectively. 
We denote the 
integers in the interval $[a,b]$ by $\mathbb{I}_{[a,b]}$.
By $\mathbb{B}_\delta$, we denote a ball of radius $\delta$, i.e., $\mathbb{B}_\delta \coloneqq \left\{ x \in \R^n | \| x \|_2 \leq \delta \right\}$.
By $\mathcal{K}_{\infty}$ we denote the class of functions $\alpha:\mathbb{R}_{\geq 0}\rightarrow\mathbb{R}_{\geq 0}$, which are continuous, strictly increasing, unbounded, and satisfy $\alpha(0)=0$.
For a given sequence $\{u_k\}_{k=0}^{N-1}$ and integers $a,b,L$, we define 
\begin{align*}
u_{[a,b]}=\begin{bmatrix}u_{a}\\\vdots\\u_{b}\end{bmatrix}
\end{align*}
as well as the Hankel matrix
\begin{align*}
H_L(u)=\begin{bmatrix}u_0&u_1&\dots&u_{N-L}\\
u_1&u_2&\dots&u_{N-L+1}\\
\vdots&\ddots&\ddots&\vdots\\
u_{L-1}&u_L&\dots&u_{N-1}\end{bmatrix}.
\end{align*}

\section{Preliminaries}
\label{sec:prelim}
In this section, we present the problem setting and discuss some preliminaries regarding the representation of LTI systems using input-output data.

\subsection{Problem setting}
We consider discrete-time multiple-input multiple-output LTI systems of the form
\begin{align}
	\begin{split}
		x_{t+1} &= Ax_t + Bu_t,\\
		y_t &= Cx_t + Du_t
	\end{split} \label{eq:MinimalLTIsystem}
\end{align}
with state $x_t\in\mathbb{R}^n$, control input $u_t\in\mathbb{R}^m$, and output $y_t\in\mathbb{R}^p$.
We assume that the model $(A,B,C,D)$ is unknown and only (noisy) input-output measurements are available. 
The control goal is to drive the system to the origin while satisfying input and output constraints $(u_t,y_t)\in\mathbb{U}\times\mathbb{Y}$ for given $\mathbb{U}\subseteq\mathbb{R}^m$, $\mathbb{Y}\subseteq\mathbb{R}^p$.

Throughout this paper, we make the standing assumption 
that the matrices in~\eqref{eq:MinimalLTIsystem} are a minimal realization, i.e., $(A,B)$ is controllable and $(A,C)$ is observable.
We consider the problem of stabilizing the origin using a quadratic stage cost with positive definite weighting matrices $Q\in\mathbb{R}^{p\times p}$ and $R\in\mathbb{R}^{m\times m}$ and assume that the origin is in the interior of the constraints, i.e., $0\in\text{int}(\mathbb{U}\times\mathbb{Y})$.

\subsection{Extended state-space system}
In the following, we briefly recap some basics regarding
an equivalent representation of the system~\eqref{eq:MinimalLTIsystem} using an extended state/autoregressive (ARX) model.
Using that $(A,C)$ is observable, the lag $\underline{l}\leq n$ of the LTI system~\eqref{eq:MinimalLTIsystem} is defined as the smallest integer $l \in \mathbb{N}$ such that the observability matrix 
	\begin{align}
	\label{eq:obs_matrix}
		\mathcal{O}_{l} \coloneqq \begin{bmatrix}
			C\\
			CA\\
			\vdots\\
			CA^{l-1}
		\end{bmatrix}
\end{align}
 has rank $n$. 
Given an upper bound $l\geq \underline{l}$ on the lag, we define the extended state $\xi$ at time $t$ as
\begin{align} \label{eq:ExtStateDefinition}
	\xi_t = \begin{bmatrix}
		u_{[t-l,t-1]}\\
		y_{[t-l,t-1]}
	\end{bmatrix}\in\mathbb{R}^{n_\xi},\quad n_\xi=l(m+p).
\end{align}

The dynamics of this extended state are given by a (typically non-minimal) LTI sytem
\begin{align}
	\begin{split}
		\xi_{t+1} = \tilde{A} \xi_t + \tilde{B} u_t,\\
		y_t = \tilde{C} \xi_t + \tilde{D} u_t,
	\end{split} \label{eq:xiDynamics}
\end{align}
which has the same input/output (I/O) behavior as the original LTI system~\eqref{eq:MinimalLTIsystem}~\cite{Goodwin84}.
Furthermore, there exists a (typically non-invertible) matrix $T\in\mathbb{R}^{n\times n_\xi}$ such that $T\xi_t=x_t$ (cf. \cite[Lemma~3]{ProvablyRobustVerificationOfDissipativity}).


The following lemma summarizes some relevant properties of the non-minimal representation~\eqref{eq:ExtStateDefinition}--\eqref{eq:xiDynamics}.
\begin{lemma}
\label{lemma:extended}
The system~\eqref{eq:xiDynamics} is such that $(\tilde{A},\tilde{B})$ is stabilizable and $(\tilde{A},\tilde{C})$ is detectable. 
Furthermore, the system~\eqref{eq:ExtStateDefinition}--\eqref{eq:xiDynamics} is input-output-to-state stable (IOSS), i.e., there exists a positive semidefinite matrix $P_{\mathrm{o}}\in\mathbb{R}^{n_\xi\times n_\xi}$ and a constant $\epsilon_{\mathrm{o}}>0$ such that for all $t\in\mathbb{N}$ the trajectories of the system~\eqref{eq:xiDynamics} satisfy
\begin{align}
\label{eq:IOSS}
W(\xi_{t+1})\leq W(\xi_t)+\|u_t\|_Q^2+\|y_t\|_R^2-\epsilon_{\mathrm{o}}\|\xi_t\|^2,
\end{align} 
with the IOSS Lyapunov function $W(\xi):=\|\xi\|_{P_{\mathrm{o}}}^2$.
\end{lemma}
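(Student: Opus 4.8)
The plan is to derive both assertions from a single structural fact about the $l$-step map $\tilde{A}^l$: propagating the extended state~\eqref{eq:ExtStateDefinition} for $l$ steps under zero input erases the entire input buffer and overwrites the output buffer with the $l$-step zero-input response. Concretely, $\tilde{A}^l\xi$ is the extended state reached after $l$ steps from $\xi$ with $u\equiv 0$; its input part is then $0$ and its output part is $\mathcal{O}_l T\xi$, so, using that $T$ maps onto $\mathbb{R}^n$, we obtain $\mathrm{Im}(\tilde{A}^l)=\mathrm{Im}\begin{bmatrix}0\\\mathcal{O}_l\end{bmatrix}$, which has dimension $n$ since $l\geq\underline{l}$ forces $\mathrm{rank}\,\mathcal{O}_l=n$.

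First I would prove detectability of $(\tilde{A},\tilde{C})$. Let $\mathcal{N}=\bigcap_{k\geq 0}\ker(\tilde{C}\tilde{A}^k)$ be the unobservable subspace. For $v\in\mathcal{N}$, the free response with $u\equiv 0$ from $\xi_0=v$ satisfies $y_t=\tilde{C}\tilde{A}^t v=0$ for all $t$, so after $l$ steps both halves of the buffer vanish and $\tilde{A}^l v=0$; hence $\mathcal{N}\subseteq\ker\tilde{A}^l$, the restriction $\tilde{A}|_{\mathcal{N}}$ is nilpotent, and every unobservable mode has eigenvalue $0$ and is stable. For stabilizability of $(\tilde{A},\tilde{B})$ I would argue dually: because $(A,B)$ is controllable and $T$ is onto, the state $l$ steps in the past can be driven to any point of $\mathbb{R}^n$, so the reachable subspace is $\mathcal{R}=\mathrm{Im}\begin{bmatrix}I & 0\\\Gamma & \mathcal{O}_l\end{bmatrix}$ with $\Gamma$ the block-Toeplitz matrix of Markov parameters. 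The identity above gives $\mathrm{Im}(\tilde{A}^l)\subseteq\mathcal{R}$, whence any $w\in\mathcal{R}^\perp$ satisfies $w^\top\tilde{A}^l=0$, i.e.\ $\mathcal{R}^\perp\subseteq\ker(\tilde{A}^\top)^l$, so the uncontrollable modes are likewise at eigenvalue $0$ and stable. This establishes the first sentence of the lemma.

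For the IOSS inequality~\eqref{eq:IOSS} I would exploit detectability to choose an output-injection gain $L$ with $\tilde{A}_L\coloneqq\tilde{A}-L\tilde{C}$ Schur, and solve the discrete Lyapunov equation $\tilde{A}_L^\top P\tilde{A}_L-P=-S$ for some $P\succ 0$ and an arbitrary $S\succ 0$. Writing the dynamics as $\xi_{t+1}=\tilde{A}_L\xi_t+L y_t+(\tilde{B}-L\tilde{D})u_t$ and expanding $\|\xi_{t+1}\|_P^2$, the term quadratic in $\xi_t$ contributes $\|\xi_t\|_P^2-\|\xi_t\|_S^2$, while Young's inequality absorbs the cross terms into $\tfrac12\|\xi_t\|_S^2$ plus constants times $\|u_t\|^2$ and $\|y_t\|^2$. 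This yields $\|\xi_{t+1}\|_P^2\leq\|\xi_t\|_P^2-c_0\|\xi_t\|^2+c_1\|u_t\|^2+c_2\|y_t\|^2$ for some $c_0,c_1,c_2>0$. Since $Q,R\succ 0$, the input-output penalty in~\eqref{eq:IOSS} dominates a positive multiple of $\|u_t\|^2+\|y_t\|^2$, so rescaling $P_{\mathrm{o}}=\alpha P$ with $\alpha$ small enough that $\alpha c_1$ and $\alpha c_2$ fall below the minimal eigenvalues of the respective stage-cost weights turns the bound into~\eqref{eq:IOSS} with $\epsilon_{\mathrm{o}}=\alpha c_0>0$ and $P_{\mathrm{o}}\succeq 0$.

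I expect the main obstacle to be the structural identity $\mathrm{Im}(\tilde{A}^l)=\mathrm{Im}\begin{bmatrix}0\\\mathcal{O}_l\end{bmatrix}$ together with the inclusions $\mathrm{Im}(\tilde{A}^l)\subseteq\mathcal{R}$ and $\mathcal{N}\subseteq\ker\tilde{A}^l$: making the behavioral reading of $\tilde{A}^l$ rigorous requires carefully relating the realization~\eqref{eq:xiDynamics} to the minimal system~\eqref{eq:MinimalLTIsystem} through $T$, and using $l\geq\underline{l}$ so that $\mathcal{O}_l$ has full column rank $n$ and the reachable and unobservable dimensions match. Once this is in place, both the eigenvalue-$0$ conclusion and the detectability-based Lyapunov construction are routine.
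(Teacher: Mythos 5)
Your detectability argument is essentially the paper's own proof in geometric form: the paper notes that $(u_t,y_t)=0$ for all $t\geq 0$ forces $\xi_t=0$ for $t\geq l$, which is exactly your observation that the unobservable subspace satisfies $\mathcal{N}\subseteq\ker\tilde{A}^l$ (this step only uses the shift structure of $\tilde{A}$, so it is sound for any ARX realization), hence the unobservable part is nilpotent. The other two parts take genuinely different routes. For the IOSS inequality~\eqref{eq:IOSS} the paper constructs nothing: it invokes the equivalence between detectability and the existence of a quadratic IOSS Lyapunov function from~\cite{Cai08}. Your output-injection construction --- $\tilde{A}-L\tilde{C}$ Schur, discrete Lyapunov equation, Young's inequality on the cross terms, rescaling $P_{\mathrm{o}}=\alpha P$ until the $u$- and $y$-terms are dominated by $\lambda_{\min}(R)$ and $\lambda_{\min}(Q)$ --- is a correct, self-contained proof of the direction of that equivalence needed here; it buys independence from the cited result at the cost of length. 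For stabilizability, the paper argues directly: pick $K$ with $A+BK$ Schur, observe that under $u_t=Kx_t=KT\xi_t$ all signals and hence $\xi_t$ converge exponentially, and conclude that $\tilde{A}+\tilde{B}\tilde{K}$ is Schur with $\tilde{K}=KT$; you instead characterize the reachable subspace as the set of consistent windows and deduce that the uncontrollable part is nilpotent.

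There is, however, a genuine gap in your stabilizability route, and it sits exactly where you located the ``main obstacle'': the identity $\mathrm{Im}(\tilde{A}^l)=\mathrm{Im}\begin{bmatrix}0\\\mathcal{O}_l\end{bmatrix}$, equivalently $\tilde{C}\tilde{A}^k=CA^kT$ on all of $\mathbb{R}^{n_\xi}$, is not a technicality that careful bookkeeping will always deliver. It holds on the subspace of windows consistent with~\eqref{eq:MinimalLTIsystem}, but off that subspace it depends on which ARX coefficients realize~\eqref{eq:xiDynamics}, and it can fail together with the lemma itself. Concretely, take $x_{t+1}=ax_t+bu_t$, $y_t=x_t$ (so $\underline{l}=1$) with $l=2$, and realize~\eqref{eq:xiDynamics} via the relation $y_t=a^2y_{t-2}+abu_{t-2}+bu_{t-1}$, which is valid along every trajectory of~\eqref{eq:MinimalLTIsystem} and admits $T=\begin{bmatrix}0&b&0&a\end{bmatrix}$. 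Then $\tilde{A}$ has spectrum $\{0,0,a,-a\}$, the left eigenvector $w=(-b,0,-a,1)^\top$ associated with $-a$ satisfies $w^\top\tilde{B}=0$, so the mode at $-a$ is uncontrollable and for $|a|\geq 1$ the pair $(\tilde{A},\tilde{B})$ is not stabilizable; correspondingly $\mathrm{Im}(\tilde{A}^2)\not\subseteq\mathcal{R}$ and your structural identity fails, while detectability still holds. A complete proof must therefore fix the canonical construction of~\cite{Goodwin84} (coefficients built from the minimal-lag relation) and verify the identity for that specific realization. To be fair, the paper's feedback argument silently skips the same point --- exponential convergence is only established for consistent initial windows, whereas Schurness of $\tilde{A}+\tilde{B}\tilde{K}$ concerns every initial condition --- but your proof is more exposed, since the global validity of the identity is the single pillar carrying your stabilizability claim.
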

\begin{proof}
\textbf{Proof of stabilizability: } 
Using that $(A,B)$ is stabilizable, there exists a matrix $K$ such that $(A+BK)$ is Schur.
Hence, for the input $u_t=Kx_t$, the variables $(x,u,y)$ exponentially converge to zero.
Due to the definition of $\xi$ (cf.~\eqref{eq:ExtStateDefinition}) this implies that $\xi$ exponentially converges to zero and thus $\tilde{A}+\tilde{B}\tilde{K}$ with $\tilde{K}=KT$ is Schur, i.e., $(\tilde{A},\tilde{B})$ is stabilizable. \\
\textbf{Proof of detectability: } 
Suppose $(u_t,y_t)=0$ for all $t\geq 0$.
This implies $\xi_t=0$ for all $t\geq l$ and thus $(\tilde{A},\tilde{C})$ is detectable, i.e.,~\eqref{eq:IOSS} holds with the quadratic IOSS Lyapunov function $W(\xi)=\|\xi\|_{P_{\mathrm{o}}}^2$ (cf.~\cite[Section 3.2]{Cai08}).
\end{proof}

\subsection{The Willems et al. Fundamental Lemma}
In order to control the system~\eqref{eq:MinimalLTIsystem} without any model identification step, we utilize the following lemma originally proposed in~\cite{Willems05}.
\begin{lemma}[Fundamental Lemma~\cite{Willems05}]\label{lem:FundamentalLemma}
Suppose $\{u_t^\mathrm{d}, y_t^\mathrm{d}\}_{t=0}^{N-1}$ is a trajectory of~\eqref{eq:MinimalLTIsystem}, and $u^\mathrm{d}$ is persistently exciting of order $L+n$. Then, $\{\bar{u}_t, \bar{y}_t\}_{t=0}^{L-1}$ is a trajectory of $G$ if and only if there exists $\alpha \in \mathbb{R}^{N-L+1}$ such that
	\begin{align}
		\begin{bmatrix}
			H_L(u^\mathrm{d})\\
			H_L(y^\mathrm{d})
		\end{bmatrix} \alpha =
		\begin{bmatrix}
			\bar{u}\\
			\bar{y}
		\end{bmatrix}.
	\end{align}
\end{lemma}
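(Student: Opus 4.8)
The plan is to treat the two implications separately and to exploit that, for an LTI system $G$, the set of all length-$L$ input-output trajectories is a finite-dimensional linear subspace $\mathcal{B}_L\subseteq\mathbb{R}^{(m+p)L}$.

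For the \emph{if} direction I would first observe that every column of the data Hankel matrix $\begin{bmatrix}H_L(u^\mathrm{d})\\H_L(y^\mathrm{d})\end{bmatrix}$ is itself a length-$L$ trajectory of $G$: the $j$-th column is the window $\{u^\mathrm{d}_{j+k},y^\mathrm{d}_{j+k}\}_{k=0}^{L-1}$ of the given data, which is a trajectory of $G$ by time-invariance. Since $G$ is linear, $\mathcal{B}_L$ is closed under linear combinations, so any $\begin{bmatrix}\bar u\\\bar y\end{bmatrix}$ in the column span of the Hankel matrix is again an element of $\mathcal{B}_L$, i.e. a trajectory. This direction uses only linearity and time-invariance, and no persistency of excitation.

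For the \emph{only if} direction the key is a dimension count. Using controllability of $(A,B)$ and $L\geq\underline{l}$, the subspace $\mathcal{B}_L$ has dimension exactly $mL+n$, because a length-$L$ trajectory is uniquely specified by a free input sequence ($mL$ degrees of freedom) together with an initial state ($n$ degrees of freedom), injectivity of the state-to-output map being guaranteed by $\mathcal{O}_L$ having full column rank for $L\geq\underline{l}$, and controllability ensuring every initial state is attainable. Since all columns of the Hankel matrix lie in $\mathcal{B}_L$, it then suffices to establish the rank condition $\operatorname{rank}\begin{bmatrix}H_L(u^\mathrm{d})\\H_L(y^\mathrm{d})\end{bmatrix}=mL+n$, which forces the columns to span all of $\mathcal{B}_L$, so that any trajectory $\begin{bmatrix}\bar u\\\bar y\end{bmatrix}\in\mathcal{B}_L$ admits a generating vector $\alpha$.

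To establish the rank condition I would factor the output data through the state-space realization: writing $X_0=[x_0^\mathrm{d}\ \cdots\ x_{N-L}^\mathrm{d}]$ for the sequence of initial states of the windows, the block-Toeplitz/convolution structure of the I/O map gives $H_L(y^\mathrm{d})=\mathcal{O}_L X_0+\mathcal{T}_L H_L(u^\mathrm{d})$, and since the associated transformation matrix has full column rank (owing to $\mathcal{O}_L$ being full column rank), $\operatorname{rank}\begin{bmatrix}H_L(u^\mathrm{d})\\H_L(y^\mathrm{d})\end{bmatrix}=\operatorname{rank}\begin{bmatrix}H_L(u^\mathrm{d})\\X_0\end{bmatrix}$, reducing the claim to showing $\begin{bmatrix}H_L(u^\mathrm{d})\\X_0\end{bmatrix}$ has full row rank $mL+n$. \textbf{This is the main obstacle.} I would prove it by contradiction: a nonzero left-kernel vector $\begin{bmatrix}\eta\\\zeta\end{bmatrix}$ yields $\eta^\top H_L(u^\mathrm{d})+\zeta^\top X_0=0$; expanding $X_0$ through the reachability map and combining the identities across all windows produces a nontrivial vector annihilating $H_{L+n}(u^\mathrm{d})$, contradicting that $u^\mathrm{d}$ is persistently exciting of order $L+n$, i.e. that $H_{L+n}(u^\mathrm{d})$ has full row rank $m(L+n)$. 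This contradiction step, which ties together controllability and persistency of excitation, is the technically delicate part; the remaining arguments are linear-algebraic bookkeeping.
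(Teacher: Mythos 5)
First, a point of reference: the paper does not prove this lemma at all --- it is imported verbatim from~\cite{Willems05}, so there is no in-paper argument to compare against. Your overall architecture is the standard state-space proof of that result: the \emph{if} direction via linearity and time-invariance is complete and correct (and you rightly note it needs no persistency of excitation), and the reduction of the \emph{only if} direction to the rank condition on $\begin{bmatrix}H_L(u^\mathrm{d})\\ X_0\end{bmatrix}$ via the factorization $H_L(y^\mathrm{d})=\mathcal{T}_LH_L(u^\mathrm{d})+\mathcal{O}_LX_0$ is the right decomposition.

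However, the crux of the lemma is exactly the step you defer, and your sketch of it is too vague to go through as written. Two ingredients are missing. First, ``expanding $X_0$ through the reachability map'' does not by itself produce an annihilator of $H_{L+n}(u^\mathrm{d})$: substituting $x_{j+i}=A^ix_j+\begin{bmatrix}A^{i-1}B&\cdots&B\end{bmatrix}u_{[j,j+i-1]}$ into the shifted kernel relations still leaves the unknown states $x_j$ in every identity. The standard device to eliminate them is to combine the shifted relations $i=0,\dots,n$ with the coefficients $a_0,\dots,a_{n-1},1$ of the characteristic polynomial of $A$, so that Cayley--Hamilton kills the term $\zeta^\top\bigl(\sum_{i}a_iA^i\bigr)x_j$; this step needs to be stated. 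Second, and more importantly, the resulting kernel vector $\theta$ of $H_{L+n}(u^\mathrm{d})$ must be shown to be nonzero whenever $(\eta,\zeta)\neq 0$, and this is precisely where controllability enters --- the lemma is false without it, so nontriviality cannot be asserted in passing. The argument is: the entries of $\theta$ multiplying the last $L$ input samples have a triangular structure in the blocks $\eta_k$ (the coefficient of $u_{j+L+n-1}$ is $\eta_{L-1}$, and so on by descending induction), so $\theta=0$ forces $\eta=0$; the remaining entries then yield, inductively, $\zeta^\top B=\zeta^\top AB=\cdots=\zeta^\top A^{n-1}B=0$, whence controllability forces $\zeta=0$. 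Without these two steps the claim that the construction ``produces a nontrivial vector annihilating $H_{L+n}(u^\mathrm{d})$'' is an assertion of the lemma's entire content rather than a proof of it. A minor further issue: your dimension count $\dim\mathcal{B}_L=mL+n$ and the full column rank of $\begin{bmatrix}I&0\\ \mathcal{T}_L&\mathcal{O}_L\end{bmatrix}$ both require $L\geq\underline{l}$, which the lemma does not assume; for $L<\underline{l}$ one should argue instead that the column span of the stacked Hankel matrix equals the image of the full parameter space under $(x_0,u)\mapsto(u,y)$, which is all of $\mathcal{B}_L$ even when $\mathcal{O}_L$ is not injective. This caveat is harmless for the paper's own use of the lemma, where the window length is $L+l\geq\underline{l}$.
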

This result allows for an equivalent characterization of LTI systems in terms of measured I/O data using the Hankel matrix $H_L$. 
We note that this equivalence is used for many data-driven approaches for LTI systems such as data-driven simulation~\cite{markovsky2008data}, system analysis~\cite{romer2019one}, controller design~\cite{DePersis19,berberich2020design}, and predictive control~\cite{DeePC4,Stability&RobustnessTEC}.

\section{Nominal Data-Driven MPC}
\label{sec:nom}

In this section, we study the closed-loop properties of a simple nominal data-driven MPC based on Lemma~\ref{lem:FundamentalLemma}.
We consider the case of noise-free measurements and data, as a preliminary to the robustness result in Section~\ref{sec:robust}.

Given I/O data $\{ u^\mathrm{d}_k,y^\mathrm{d}_k\}_{k=0}^{N-1}$ and past I/O measurements $(u_{[t-l,t-1]},y_{[t-l,t-1]})$, the nominal data-driven MPC scheme is defined as
\begin{subequations}
	\begin{align}
		J_{L}^{*}&\left(u_{[t-l, t-1]}, y_{[t-l, t-1]}\right) =
		\min _{\alpha(t) \atop \bar{u}(t), \bar{y}(t)} \sum_{k=0}^{L-1} \|\bar{u}_k\|_R^2 + \|\bar{y}_k\|_Q^2 \label{eq:nominalDDUNCMPC_Cost}\\ 
		\text{s.\,t. }&\left[\begin{array}{c}\bar{u}_{[-l, L-1]}(t) \\ \bar{y}_{[-l,L-1]}(t)\end{array}\right]=\left[\begin{array}{c}H_{L+l}(u^\mathrm{d})\\ H_{L+l}(y^\mathrm{d})\end{array}\right]\alpha(t), \label{eq:nominalDDUNCMPC_Dynamics}\\
		& \left[\begin{array}{c}\bar{u}_{[-l, -1]}(t) \\ \bar{y}_{[-l,-1]}(t)\end{array}\right] = \left[\begin{array}{c}u_{[t-l, t-1]}\\ y_{[t-l,t-1]}\end{array}\right], \label{eq:nominalDDUNCMPC_IC}\\ 
		& \bar{u}_{k}(t) \in \mathbb{U},\ \bar{y}_{k}(t) \in \mathbb{Y}\ \forall k \in \mathbb{I}_{[0, L-1]}.
	\end{align}
	\label{eq:nominalDDUNCMPC}
\end{subequations}
We denote an optimal solution to Problem \eqref{eq:nominalDDUNCMPC} by $\alpha^*(t)$, $\bar{u}^*(t)$, $\bar{y}^*(t)$. 
Compared to a model-based MPC, the standard prediction model is replaced by the implicit data-based constraint~\eqref{eq:nominalDDUNCMPC_Dynamics} (cf. Lemma~\ref{lem:FundamentalLemma}) which implies that $(\bar{u}(t),\bar{y}(t))$ is a trajectory of the system~\eqref{eq:MinimalLTIsystem}.
Further, past $l$ I/O measurements are used in~\eqref{eq:nominalDDUNCMPC_IC} to indirectly specify initial conditions for the state $x_t$ (cf. also the extended state $\xi_t$ in Sec.~\ref{sec:prelim}).
The closed loop is defined using a standard receding horizon implementation as summarized in Algorithm~\ref{alg:nominalMPC}.
\begin{algorithm}[H]
	\caption{Nominal Data-Driven MPC Scheme}
	\label{alg:nominalMPC}
	\begin{algorithmic}[1]
		\Statex \textbf{Given:} horizon length $L$, I/O data $\{u_k^\mathrm{d},y_k^\mathrm{d}\}_{k=0}^{N-1}$, where $u^\mathrm{d}_{[0,N-1]}$ is persistently exciting of order $L+l+n$, the constraint sets $\mathbb{U},\mathbb{Y}$ and I/O weighting matrices $Q,R$.
		\State  At time $t$, take the past $l$ measurements $(u_{[t-l,t-1]}$, $y_{[t-l,t-1]})$ and solve~\eqref{eq:nominalDDUNCMPC}.
		\State Apply the input $u_t=\bar{u}_0^*(t)$.
		\State Set $t \coloneqq t+1$ and go back to 1.
	\end{algorithmic}
\end{algorithm}
For the following theoretical analysis of Algorithm~\ref{alg:nominalMPC}, we use the Lyapunov function 
\begin{align}
Y_L(\xi_t)\coloneqq J_L^*(\xi_t)+W(\xi_t)
\end{align}
with the IOSS Lyapunov function $W$, compare Lemma~\ref{lemma:extended}.
\begin{theorem}[Nominal stability guarantees]
\label{thm:NominalStabilityGuarantees}
Suppose $u^\mathrm{d}$ is persistently exciting of order $L+n+l$.
Then, for any constant $\bar{Y} > 0$, there exist a sufficiently long prediction horizon $L_{\bar{Y}}> 0$, such that for all $L > L_{\bar{Y}}$ and any initial condition satisfying 
$ Y_L(\xi_0)\leq \bar{Y}$, the MPC problem \eqref{eq:nominalDDUNCMPC} is recursively feasible, the constraints are satisfied and the origin is exponentially stable for the resulting closed-loop system.  
\end{theorem}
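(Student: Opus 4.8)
The plan is to reduce the data-driven problem to a nominal model-based MPC without terminal ingredients for the extended-state system~\eqref{eq:xiDynamics} and then close the argument with the combined Lyapunov function $Y_L=J_L^*+W$. By the Fundamental Lemma (Lemma~\ref{lem:FundamentalLemma}), the data constraint~\eqref{eq:nominalDDUNCMPC_Dynamics} is equivalent to requiring that $(\bar{u}(t),\bar{y}(t))$ be a genuine I/O trajectory of~\eqref{eq:MinimalLTIsystem}, and~\eqref{eq:nominalDDUNCMPC_IC} fixes the extended state $\xi_t$ via $T\xi_t=x_t$. Hence $J_L^*$ depends only on $\xi_t$, and~\eqref{eq:nominalDDUNCMPC} is exactly a nominal MPC for~\eqref{eq:xiDynamics} with the positive semidefinite stage cost $\ell(\bar{u}_k,\bar{y}_k)=\|\bar{u}_k\|_R^2+\|\bar{y}_k\|_Q^2$, which penalizes the I/O pair but not the full state. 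The central difficulty is precisely this lack of positive definiteness in $\xi$: the usual inequality $J_L^*(\xi_{t+1})\le J_L^*(\xi_t)-\ell(u_t,y_t)$ bounds only the momentary I/O cost, not $\|\xi_t\|$, so $J_L^*$ alone is not a valid Lyapunov function. I would resolve this exactly as the proposed $Y_L$ suggests, using the IOSS Lyapunov function $W$ from Lemma~\ref{lemma:extended} to recover a state-dependent decrease.

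First I would establish quadratic bounds $\underline{c}\,\|\xi\|^2\le Y_L(\xi)\le\bar{c}\,\|\xi\|^2$ on the sublevel set $\{Y_L\le\bar{Y}\}$. The upper bound is the content of the commented-out step: by stabilizability (Lemma~\ref{lemma:extended}) there is a feedback $u=\tilde{K}\xi$ driving $(\xi,u,y)$ exponentially to the origin, which is feasible for small $\|\xi\|$ since $0\in\mathrm{int}(\mathbb{U}\times\mathbb{Y})$ and, by the Fundamental Lemma, corresponds to an admissible $\alpha(t)$; its quadratically bounded infinite-horizon cost gives $J_L^*(\xi)\le\gamma_{\mathrm{s}}\|\xi\|^2$, and $W(\xi)\le\lambda_{\max}(P_{\mathrm{o}})\|\xi\|^2$ completes it. The lower bound follows by summing the IOSS inequality~\eqref{eq:IOSS} over the predicted horizon to bound $\|\xi_t\|^2$ by the accumulated stage cost plus $W$, i.e.\ by $Y_L$.

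The core step is the decrease of $Y_L$. Using the standard shifted candidate, namely the tail $\bar{u}^*_{[1,L-1]}(t)$ of the optimal solution appended by one control step generated by the local feedback at the predicted terminal state, I would obtain $J_L^*(\xi_{t+1})\le J_L^*(\xi_t)-\ell(u_t,y_t)+\varepsilon_L(\xi_t)$, where $\varepsilon_L(\xi_t)$ is the stage cost incurred by the appended piece. Adding~\eqref{eq:IOSS} for $W$ cancels the $+\ell(u_t,y_t)$ term and produces $-\epsilon_{\mathrm{o}}\|\xi_t\|^2$, giving $Y_L(\xi_{t+1})-Y_L(\xi_t)\le\varepsilon_L(\xi_t)-\epsilon_{\mathrm{o}}\|\xi_t\|^2$. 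Invoking the analysis of MPC with positive semidefinite cost in~\cite{ImplicitSolutions,ForWantOfALocalCLF}, the terminal error $\varepsilon_L$ can be made arbitrarily small uniformly on $\{Y_L\le\bar{Y}\}$ by enlarging $L$, because the bounded optimal cost forces the predicted trajectory's tail into a neighborhood of the origin (a turnpike-type argument). Choosing $L_{\bar{Y}}$ so that $\varepsilon_L(\xi_t)\le\tfrac{\epsilon_{\mathrm{o}}}{2}\|\xi_t\|^2$ yields the relaxed Lyapunov inequality $Y_L(\xi_{t+1})-Y_L(\xi_t)\le-\tfrac{\epsilon_{\mathrm{o}}}{2}\|\xi_t\|^2$, which together with the quadratic sandwich on $Y_L$ is a standard certificate of exponential stability.

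I expect the main obstacle to be making $\varepsilon_L$ vanish uniformly while simultaneously guaranteeing recursive feasibility under the hard constraints $\mathbb{U},\mathbb{Y}$: the appended control step must itself be admissible, which is only assured once the predicted trajectory has entered the region around the origin where the local feedback respects the constraints. Both issues hinge on the same estimate, that a bounded-cost optimal trajectory is driven close to the origin towards the end of a long horizon, so the quantitative coupling between $L$, the sublevel bound $\bar{Y}$, and the constraint margin is the delicate part; the remainder is the now-routine combination of the decrease with the quadratic bounds.
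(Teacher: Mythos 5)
Your proposal is correct and takes essentially the same route as the paper: reduction of the data-driven scheme to a model-based MPC with positive semidefinite stage cost via Lemma~\ref{lem:FundamentalLemma}, verification of local stabilizability (via $\tilde{K}$ and $0\in\text{int}(\mathbb{U}\times\mathbb{Y})$) and IOSS detectability (Lemma~\ref{lemma:extended}), and the combined Lyapunov function $Y_L=J_L^*+W$. The only difference is that you unpack the internal shifted-candidate/turnpike machinery, whereas the paper invokes it as a black box by citing~\cite{ImplicitSolutions}, whose Theorem~1 delivers the quadratic bounds and the decrease inequality~\eqref{eq:NominalLyapunov} directly once those two assumptions are verified.
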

\begin{proof}
Lemma~\ref{lem:FundamentalLemma} ensures that the data-driven MPC is equivalent to the model-based MPC studied in~\cite{ImplicitSolutions} (compare also~\cite{DeePC1}). 
Inequality~\eqref{eq:IOSS} from Lemma~\ref{lemma:extended} corresponds to the detectability condition~\cite[Ass.~2]{ImplicitSolutions}. 
Using that $0\in\text{int}(\mathbb{U}\times\mathbb{Y})$, there exists a small enough constant $\delta>0$ such that for any initial condition $\xi_0=Tx_0$ with $\|\xi_0\|\leq \delta$,  the closed loop with the stabilizing feedback $u_t=\tilde{K}\xi_t$ and $\tilde{K}$ from Lemma~\ref{lemma:extended} satisfies $(u_t,y_t)\in\mathbb{U}\times\mathbb{Y}$, $\forall t\geq 0$. Furthermore, the quadratic stage cost in combination with an exponential controllability argument ensures that the infinite horizon cost can be bounded using some constant $\gamma_{\mathrm{s}}>0$, i.e., 
\begin{align}\label{eq:thm_nominal_gammas}
J_L^*(\xi_t)\leq \gamma_{\mathrm{s}}\|\xi_t\|^2.
\end{align}
Thus, the local stabilizability condition~\cite[Ass.~1]{ImplicitSolutions} also holds. 
Hence, we can apply~\cite[Thm.~1]{ImplicitSolutions} to conclude that the closed loop satisfies
	\begin{subequations}
	\label{eq:NominalLyapunov}
		\begin{align}
			\epsilon_{\mathrm{o}} \| \xi_t \|_2^2 \leq Y_L(\xi_t) &\leq \gamma_{\bar{Y}} \| \xi_t \|_2^2, \label{eq:NominalLyapunovBounds}\\
			Y_L(\xi_{t+1}) - Y_L(\xi_t) &\leq -\alpha_L \epsilon_{\mathrm{o}} \| \xi_t \|_2^2, \label{eq:NominalLyapunovDecayBounds}
		\end{align}
	\end{subequations}
with some constants $\gamma_{\bar{Y}},\alpha_L>0$. 
 Exponential stability of $\xi=0$ follows from~\eqref{eq:NominalLyapunov} using standard Lyapunov arguments. 
\end{proof}

Theorem~\ref{thm:NominalStabilityGuarantees} provides a lower bound on the prediction horizon $L$ that guarantees the desirable closed-loop properties for the nominal data-driven MPC.   
This lower bound depends on $\bar{Y}$ and hence, on the size of the guaranteed closed-loop region of attraction.
Using the stabilizability and detectability properties derived in Lemma~\ref{lemma:extended}, the result directly follows from model-based MPC theory with semidefinite cost in~\cite{ImplicitSolutions}.
Due to the absence of terminal ingredients, the provided theoretical guarantees only hold for a sufficiently large prediction horizon and choosing a too small prediction horizon $L$ may result in an unstable closed loop, compare the numerical example in~\cite{Stability&RobustnessTEC}.
It is possible to compute the constants in~\eqref{eq:NominalLyapunov} and thus the bound $L_{\bar{Y}}$ on the prediction horizon required for stability explicitly if the system constants $\epsilon_0$ and $\gamma_s$ for detectability and stabilizability are known.
These constants can be determined using only measured data and no model knowledge based on related works on data-driven dissipativity analysis and robust control~\cite{ProvablyRobustVerificationOfDissipativity,berberich2020combining}.
Finally, we note that the same data-driven MPC formulation has also been suggested in~\cite{yang2015data,DeePC1}, however, without a corresponding closed-loop stability analysis.

\begin{remark}
\label{rk:RoA}
	The guaranteed region of attraction (RoA) of the closed loop as stated in Theorem~\ref{thm:NominalStabilityGuarantees} is defined only implicitly via the upper bound on the Lyapunov function $\bar{Y}$. 
         The required prediction horizon $L_{\bar{Y}}$ scales linearly with the value $\bar{Y}$. 
          For $\bar{Y} \rightarrow \infty$, the RoA approaches the set of initially feasible states. 
          The explicit characterization of the RoA is, however, challenging, similar to model-based MPC~\cite{ImplicitSolutions}.
        In the important special case that we only have input constraints and the system is open-loop stable (or similarly if we have no constraints), Theorem~\ref{thm:NominalStabilityGuarantees} can ensure global stability. 
            In this case, the sufficiently large prediction horizon reduces to $L>1+\dfrac{\gamma_{\mathrm{s}}(\gamma_{\mathrm{s}}+\gamma_{\mathrm{o}}-\epsilon_{\mathrm{o}})}{\epsilon_{\mathrm{o}}^2}$ (cf.~\eqref{eq:horizon_bound_stability}).
\end{remark}
\section{Robust data-driven MPC}\label{sec:robust}
In practical applications, some measure of noise in offline and online data is unavoidable. 
This causes generally undesirable effects in the nominal MPC scheme \eqref{eq:nominalDDUNCMPC}, e.\,g., deteriorated performance or infeasibility since Lemma~\ref{lem:FundamentalLemma} no longer provides an exact parametrization of the space of system trajectories in \eqref{eq:nominalDDUNCMPC_Dynamics}.
In this section, we consider a modification of the nominal data-driven MPC scheme~\eqref{eq:nominalDDUNCMPC} and we prove that the scheme practically exponentially stabilizes the closed loop despite the noise affecting the data.
After presenting the MPC scheme in Section~\ref{subsec:robust_scheme}, we provide an important technical result in Section~\ref{subsec:robust_technical}, and we prove closed-loop stability in Section~\ref{subsec:robust_stability}.
For simplicity, we do not consider output constraints in this section, i.e., $\mathbb{Y}=\mathbb{R}^p$, but we conjecture that an extension of the presented results to closed-loop output constraint satisfaction is possible following similar arguments as in~\cite{berberich2020constraints}.

\subsection{Robust data-driven MPC scheme}\label{subsec:robust_scheme}

We assume the output data used for prediction via Lemma~\ref{lem:FundamentalLemma} is given by $\tilde{y}_t^\mathrm{d} = y_t^\mathrm{d} + \varepsilon_t^\mathrm{d}$, where the noise is bounded by $\| \varepsilon_t^\mathrm{d} \|_\infty \leq \bar{\varepsilon}$ for $t \geq 0$. 
Similarly, the measured output values used for the initial conditions are perturbed as $\tilde{y}_t = y_t + \varepsilon_t$, again with the noise bounded by $\| \varepsilon_t \|_\infty \leq \bar{\varepsilon}$ for $t \geq 0$.
Given I/O data $\{ u_k^\mathrm{d},\tilde{y}_k^\mathrm{d} \}_{k=0}^{N-1}$ and past I/O measurements $(u_{[t-l,t-1]},\tilde{y}_{[t-l,t-1]})$, the robust MPC problem is defined as
\begin{subequations}
	\begin{align}
		J_{L}^{*}&\left(u_{[t-l, t-1]}, \tilde{y}_{[t-l, t-1]}\right) = \label{eq:robustDDUNCMPC_Cost}\\
		\min _{\alpha(t), \sigma(t) \atop \bar{u}(t), \bar{y}(t)} &\sum_{k=0}^{L-1} \| \bar{u}_k(t) \|_R^2 + \| \bar{y}_k(t) \|_Q^2 + \lambda_\alpha \bar{\varepsilon} \| \alpha(t) \|_2^2 + \frac{\lambda_\sigma}{\bar{\varepsilon}} \| \sigma(t) \|_2^2  \nonumber\\
		\text{s.t.} &\left[\begin{array}{c}\bar{u}(t) \\ \bar{y}(t) + \sigma(t)\end{array}\right]=\left[\begin{array}{c}H_{L+l}(u^\mathrm{d})\\ H_{L+l}(\tilde{y}^\mathrm{d})\end{array}\right]\alpha(t), \label{eq:robustDDUNCMPC_Dynamics}\\
		& \left[\begin{array}{c}\bar{u}_{[-l, -1]}(t) \\ \bar{y}_{[-l,-1]}(t)\end{array}\right] = \left[\begin{array}{c}u_{[t-l, t-1]}\\ \tilde{y}_{[t-l,t-1]}\end{array}\right], \label{eq:robustDDUNCMPC_IC}\\ 
		& \bar{u}_{k}(t) \in \mathbb{U},\ k \in \mathbb{I}_{[0, L-1]}.
	\end{align}
	\label{eq:robustDDUNCMPC}
\end{subequations}
An important difference between the nominal scheme \eqref{eq:nominalDDUNCMPC} and the robust scheme \eqref{eq:robustDDUNCMPC} is that in the latter, the data in the prediction model \eqref{eq:robustDDUNCMPC_Dynamics} and the initial condition \eqref{eq:robustDDUNCMPC_IC} are affected by noise.
Thus, the considered control problem can be interpreted as a noisy output-feedback problem with multiplicative model uncertainty.
In order to account for the noise in~\eqref{eq:robustDDUNCMPC_Dynamics}, Problem~\eqref{eq:robustDDUNCMPC} contains a slack variable $\sigma$, which is regularized in the cost with some parameter $\lambda_{\sigma}>0$.
Moreover, a regularizing cost on the squared Euclidean norm of $\alpha$ is introduced with some parameter $\lambda_{\alpha}>0$, which decreases the influence of noise on the predicted trajectories in~\eqref{eq:robustDDUNCMPC_Dynamics} (compare also~\cite{DeePC1,Stability&RobustnessTEC}).
Note that the regularization parameters depend on the noise bound $\bar{\varepsilon}$ and therefore, the robust scheme \eqref{eq:robustDDUNCMPC} reduces to the nominal one for $\bar{\varepsilon}\rightarrow0$.
Further, if the input constraint set $\mathbb{U}$ is a convex polytope, then Problem \eqref{eq:robustDDUNCMPC} is a strictly convex quadratic program, which can be solved efficiently.
We write $\bar{u}^*(t)$, $\bar{y}^*(t)$, $\alpha^*(t)$, $\sigma^*(t)$ for the optimal solution of~\eqref{eq:robustDDUNCMPC} at time $t$.

In this section, we analyze the closed-loop properties resulting from the receding horizon MPC implementation based on Problem~\eqref{eq:robustDDUNCMPC}, see Algorithm~\ref{alg:robustMPC} below.

\begin{algorithm}[H]
	\caption{Robust Data-Driven MPC Scheme}
	\label{alg:robustMPC}
	\begin{algorithmic}[1]
		\Statex \textbf{Given:} horizon length $L$, I/O data $\{u_k^\mathrm{d},\tilde{y}_k^\mathrm{d}\}_{k=0}^{N-1}$, where $u^\mathrm{d}_{[0,N-1]}$ is persistently exciting of order $L+l+n$, constraint set $\mathbb{U}$, I/O weighting matrices $Q, R$, regularization parameters $\lambda_\alpha, \lambda_\sigma$ and noise bound $\bar{\varepsilon}$.
		\State  At time $t$, take the past $l$ measurements $(u_{[t-l,t-1]}$, $\tilde{y}_{[t-l,t-1]})$ and solve~\eqref{eq:robustDDUNCMPC}.
		\State Apply the input $u_t=\bar{u}_0^*(t)$.
		\State Set $t \coloneqq t+1$ and go back to 1.
	\end{algorithmic}
\end{algorithm}

We note that the MPC scheme considered in this section is the same as in~\cite{DeePC1,DeePC4}, wherein open-loop robustness properties are shown using probabilistic arguments.
Thus, our results can also be seen as providing \emph{closed-loop} guarantees of the algorithm first proposed in~\cite{DeePC1} which has sparked increasing interest in the recent literature.
Since Problem~\ref{eq:robustDDUNCMPC} does not contain any stabilizing terminal ingredients and due to the noisy output measurements, the stability analysis is non-trivial and divided in the derivation of a continuity-like property of the Lyapunov function (Section~\ref{subsec:robust_technical}) and the actual stability proof (Section~\ref{subsec:robust_stability}).


\begin{remark}
Note that the proposed MPC scheme penalizes the difference of the predicted trajectory w.r.t. zero, i.e., we only consider stabilization of the origin.
All results in this section hold qualitatively for non-zero setpoints with a lower noise level $\bar{\varepsilon}$ since the noise acts as a multiplicative uncertainty in~\eqref{eq:robustDDUNCMPC_Dynamics} (compare also~\cite[Remark 5]{Stability&RobustnessTEC} for a more detailed discussion of this issue for robust data-driven MPC with terminal equality constraints).
For the implementation of the corresponding MPC scheme, it needs to be verified whether the given input-output setpoint is indeed an equilibrium, e.g., via a prior experiment in case of open-loop stable systems.
In case an exact equilibrium point of the unknown plant is not available, one can also leverage a data-driven MPC formulation with artificial setpoints, cf.~\cite{berberich2020tracking,berberich2021linearpart2}.
\end{remark}

\subsection{Main technical result}\label{subsec:robust_technical}

We denote the perturbed extended state $\tilde{\xi}_t$ by
\begin{align}
	\tilde{\xi}_t \coloneqq 
	\begin{bmatrix}
		u_{[t-l,t-1]}\\
		\tilde{y}_{[t-l,t-1]}
	\end{bmatrix}
	= \xi_t +
	\begin{bmatrix}
		0_{lm \times 1}\\
		\varepsilon_{[t-l,t-1]}
	\end{bmatrix}.
\end{align}

The following technical result proves a continuity-like property of the Lyapunov function candidate $Y_L(\tilde{\xi}_t) \coloneqq J_L^*(\tilde{\xi}_t)+ W(\tilde{\xi}_t)$, where $W$ is an IOSS Lyapunov function (compare Lemma~\ref{lemma:extended}).
The result is inspired by previous work on \emph{model-based} robust MPC with state measurements~\cite{NovelConstraintTightening}, and it is useful in the proof of Theorem \ref{thm:practicalStabilityThm} below.

\begin{lemma}\label{lem:robustStabLemma}
	Assume Problem \eqref{eq:robustDDUNCMPC} is feasible at time $t$, $u^\mathrm{d}$ is persistently exciting of order $l+L+n$, and  $Y_L(\tilde{\xi}_{t}) \leq \bar{Y}$.
	Then, the function $Y_{L-1}(\xi)$ satisfies
	\begin{align} \label{eq:ContLikePropOfLyap}
		Y_{L-1}(\tilde{\xi}_{t+1}) \leq Y_L(\tilde{\xi}_t) - \epsilon_{\mathrm{o}} \| \tilde{\xi}_t \|_2^2 + \alpha_3(\bar{\varepsilon}),
	\end{align}
	where $\alpha_3 \in \mathcal{K}_\infty$ and with $\epsilon_0>0$ as in Lemma~\ref{lemma:extended}.
\end{lemma}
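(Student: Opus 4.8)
The plan is to run the standard receding-horizon argument adapted to noisy data and measurements: use the minimizer of Problem~\eqref{eq:robustDDUNCMPC} at time $t$ to build a feasible candidate for the horizon-$(L-1)$ problem at time $t+1$, bound its cost to control $J_{L-1}^*(\tilde{\xi}_{t+1})$, and combine this with the IOSS inequality~\eqref{eq:IOSS} of Lemma~\ref{lemma:extended} for the $W$-part, as in the model-based template of~\cite{NovelConstraintTightening}. Throughout, the hypothesis $Y_L(\tilde{\xi}_t)\leq\bar{Y}$ together with $J_L^*\leq Y_L$ and the lower bound $\epsilon_{\mathrm{o}}\|\tilde{\xi}_t\|_2^2\leq Y_L$ is used to keep all constants uniform on the sublevel set, so that the remainder collected into $\alpha_3$ depends only on $\bar{Y}$ and the data. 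I would decompose $Y_{L-1}(\tilde{\xi}_{t+1})-Y_L(\tilde{\xi}_t)=[J_{L-1}^*(\tilde{\xi}_{t+1})-J_L^*(\tilde{\xi}_t)]+[W(\tilde{\xi}_{t+1})-W(\tilde{\xi}_t)]$ and treat the two brackets separately.

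For the candidate I would exploit that $\alpha^*(t)$ reproduces, through the \emph{noise-free} data, a genuine system trajectory: writing the dynamics~\eqref{eq:robustDDUNCMPC_Dynamics} via $H_{L+l}(\tilde{y}^\mathrm{d})=H_{L+l}(y^\mathrm{d})+H_{L+l}(\varepsilon^\mathrm{d})$, the sequence $(\bar{u}^*(t),\hat{y}(t))$ with $\hat{y}(t)\coloneqq\bar{y}^*(t)+\sigma^*(t)-H_{L+l}(\varepsilon^\mathrm{d})\alpha^*(t)=H_{L+l}(y^\mathrm{d})\alpha^*(t)$ is, by Lemma~\ref{lem:FundamentalLemma} applied to the exact data, a trajectory of $G$. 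Its one-step shift is again such a trajectory, so a further application of Lemma~\ref{lem:FundamentalLemma} at depth $L-1+l$ (persistency of excitation of order $L+l+n$ implying that of order $L-1+l+n$) yields a candidate $\alpha^{\mathrm{cand}}(t+1)$ reproducing it; I would take the minimum-norm such $\alpha^{\mathrm{cand}}$, so that $\|\alpha^{\mathrm{cand}}(t+1)\|_2$ is bounded by a $\bar{Y}$-dependent constant independent of $\bar{\varepsilon}$. I then set $\bar{u}^{\mathrm{cand}},\bar{y}^{\mathrm{cand}}$ to this shifted trajectory on the prediction part and to $\tilde{\xi}_{t+1}$ on the initial-condition part~\eqref{eq:robustDDUNCMPC_IC}, letting $\sigma^{\mathrm{cand}}(t+1)$ absorb the residual in the noisy Hankel equation.

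Next I would quantify the error terms. From the cost one reads off $\lambda_\alpha\bar{\varepsilon}\|\alpha^*(t)\|_2^2\leq\bar{Y}$ and $\tfrac{\lambda_\sigma}{\bar{\varepsilon}}\|\sigma^*(t)\|_2^2\leq\bar{Y}$, i.e. $\|\alpha^*(t)\|_2=\mathcal{O}(\bar{\varepsilon}^{-1/2})$, $\|\sigma^*(t)\|_2=\mathcal{O}(\bar{\varepsilon}^{1/2})$, hence $\|H_{L+l}(\varepsilon^\mathrm{d})\alpha^*(t)\|_2=\mathcal{O}(\bar{\varepsilon}^{1/2})$ and $\hat{y}(t)$ differs from $\bar{y}^*(t)$ by $\mathcal{O}(\bar{\varepsilon}^{1/2})$. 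For the $W$-part I pass to the true states, using $\tilde{\xi}_t-\xi_t=[0;\varepsilon_{[t-l,t-1]}]$ with $\|\varepsilon\|_\infty\leq\bar{\varepsilon}$ so that $|W(\tilde{\xi})-W(\xi)|$ is $\mathcal{O}(\bar{\varepsilon})$ on the sublevel set, apply~\eqref{eq:IOSS} to $\xi$, and replace $\|\xi_t\|_2^2$ by $\|\tilde{\xi}_t\|_2^2$ at the cost of $\mathcal{O}(\bar{\varepsilon})$. Since the quadratic stage cost $\ell$ is Lipschitz on the bounded set, the candidate running cost equals the shifted optimal running cost up to $\mathcal{O}(\bar{\varepsilon}^{1/2})$, and by optimality
\[
J_{L-1}^*(\tilde{\xi}_{t+1})\leq J_{L-1}(\mathrm{cand})\leq J_L^*(\tilde{\xi}_t)-\ell(u_t,\bar{y}_0^*(t))+\frac{\lambda_\sigma}{\bar{\varepsilon}}\big(\|\sigma^{\mathrm{cand}}\|_2^2-\|\sigma^*(t)\|_2^2\big)+\alpha_3'(\bar{\varepsilon}),
\]
with $\alpha_3'\in\mathcal{K}_\infty$. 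Adding the $W$-bound, the first stage cost cancels the stage-cost term of~\eqref{eq:IOSS}, leaving $-\epsilon_{\mathrm{o}}\|\tilde{\xi}_t\|_2^2$ plus a $\mathcal{K}_\infty(\bar{\varepsilon})$ remainder, which is~\eqref{eq:ContLikePropOfLyap}.

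The hard part will be the $\tfrac{\lambda_\sigma}{\bar{\varepsilon}}$-weighted slack. The unavoidable mismatch between the shifted genuine trajectory and the freshly measured initial condition $\tilde{y}_t$ is governed by $\varepsilon_t$, $\sigma^*(t)$ and $H(\varepsilon^\mathrm{d})\alpha^*(t)$ and is therefore only $\mathcal{O}(\bar{\varepsilon}^{1/2})$, so the isolated bound $\tfrac{\lambda_\sigma}{\bar{\varepsilon}}\|\sigma^{\mathrm{cand}}\|_2^2$ is merely $\mathcal{O}(1)$ and does \emph{not} vanish as $\bar{\varepsilon}\to0$. The crux is thus to construct $\sigma^{\mathrm{cand}}$ so that its dominant part is exactly the shift of $\sigma^*(t)$ (plus an $\mathcal{O}(\bar{\varepsilon})$ correction of the form $H(\varepsilon^\mathrm{d})\alpha^{\mathrm{cand}}$), making the two $\mathcal{O}(1)$ slack penalties cancel in the displayed difference and leaving only a genuine $\mathcal{K}_\infty(\bar{\varepsilon})$ surplus; establishing this cancellation rigorously, rather than bounding the candidate cost in isolation, and keeping every constant uniform over $\{Y_L\leq\bar{Y}\}$, is where the real work lies. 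Once it is carried out, the strict decrease $-\epsilon_{\mathrm{o}}\|\tilde{\xi}_t\|_2^2$ originates entirely from the IOSS term and the claim follows.
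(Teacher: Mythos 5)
Your high-level skeleton (shift the previous solution, bound the candidate cost, add the IOSS inequality for the $W$-part) matches the paper's, and your error budget $\|\alpha^*(t)\|_2=\mathcal{O}(\bar{\varepsilon}^{-1/2})$, $\|\sigma^*(t)\|_2=\mathcal{O}(\bar{\varepsilon}^{1/2})$ is correct. The genuine gap sits exactly at the point you yourself flag as ``the crux'': the cancellation of the $\tfrac{\lambda_\sigma}{\bar{\varepsilon}}$-weighted slack penalties cannot be carried out with your candidate. Because your $\alpha^{\mathrm{cand}}$ reproduces the shift of the genuine trajectory $H_{L+l}(y^\mathrm{d})\alpha^*(t)$, while the initial-condition blocks of the candidate output are pinned to the fresh measurements $\tilde{y}_{[t+1-l,t]}$, the candidate slack necessarily differs from the shift $\sigma^{\mathrm{s}}\coloneqq\sigma^*_{[-l+1,L-1]}(t)$ by the shifted noise-Hankel image $H_{L+l}(\varepsilon^\mathrm{d})\alpha^*(t)$ and, in the last initial block, by the one-step output prediction error $\tilde{y}_t-\bar{y}_0^*(t)$; both are only $\mathcal{O}(\sqrt{\bar{\varepsilon}})$ (cf.~\eqref{eq:PredErrorAlphaBound} and \eqref{eq:OutputPredErrorNormBound}), not the $\mathcal{O}(\bar{\varepsilon})$ your argument needs. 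Consequently
\begin{align*}
\frac{\lambda_\sigma}{\bar{\varepsilon}}\left(\|\sigma^{\mathrm{cand}}\|_2^2-\|\sigma^*(t)\|_2^2\right)
\leq \frac{\lambda_\sigma}{\bar{\varepsilon}}\,\|\sigma^{\mathrm{cand}}-\sigma^{\mathrm{s}}\|_2\left(\|\sigma^{\mathrm{cand}}\|_2+\|\sigma^{\mathrm{s}}\|_2\right)
= \frac{\lambda_\sigma}{\bar{\varepsilon}}\,\mathcal{O}(\sqrt{\bar{\varepsilon}})\,\mathcal{O}(\sqrt{\bar{\varepsilon}})=\mathcal{O}(1),
\end{align*}
which is bounded but does not vanish as $\bar{\varepsilon}\to 0$, so no $\mathcal{K}_\infty$ function $\alpha_3$ can be extracted. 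The obstruction is structural, not a matter of sharper bookkeeping: the initial-condition segment of any nominal candidate trajectory with the shifted input ranges over an $n$-dimensional affine set, whereas the target $\tilde{y}_{[t+1-l,t]}+\sigma^{\mathrm{s}}_{[-l,-1]}$ deviates from the shifted genuine trajectory in a direction built from $H_{L+l}(\varepsilon^\mathrm{d})\alpha^*(t)$ and the prediction error, which is generically not realizable by any system trajectory when $lp>n$; matching it to $\mathcal{O}(\bar{\varepsilon})$ precision, as your cancellation requires, is therefore impossible.

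The paper's proof avoids the cancellation entirely by choosing a different anchor for the candidate (Appendix, Part II): only the \emph{input} is shifted, and $\bar{\alpha}(t+1)$ is defined through the right-inverse of the noise-free Hankel matrix $H_{ux}$ built from $u^\mathrm{d}$ and the state data $x^\mathrm{d}$, evaluated at the \emph{true} state $x_{t+1-l}$, see \eqref{eq:NewCandidateAlpha}; the output candidate is the corresponding exact nominal trajectory \eqref{eq:NewOutputCandidate}. Then the initial-condition mismatch is exactly the measurement noise $\varepsilon_{[t+1-l,t]}$, and since $\|\bar{\alpha}(t+1)\|_2\leq c_\alpha$ uniformly in $\bar{\varepsilon}$ (see \eqref{eq:bound_alpha_cand}), the candidate slack \eqref{eq:NewSlackCandidate} is $\mathcal{O}(\bar{\varepsilon})$, so its penalty $\lambda_\sigma\bar{\varepsilon}b_\sigma^2$ vanishes and the previous optimal regularization terms are simply dropped by nonnegativity --- no cancellation is needed. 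The price, the $\mathcal{O}(\sqrt{\bar{\varepsilon}})$ deviation $\delta_k^y$ between the candidate outputs and $\bar{y}^*_{k+1}(t)$, is routed into the \emph{stage cost} (see \eqref{eq:OutputPredErrorBound1}--\eqref{eq:DeltaYSquaredBound}), where it is not amplified by $1/\bar{\varepsilon}$ and contributes only $\mathcal{K}_\infty$ terms. Replacing your candidate by this one repairs the argument; the remainder of your outline then goes through essentially as written.
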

The detailed proof of Lemma~\ref{lem:robustStabLemma} can be found in Appendix~\ref{app:proof}.
In the proof, we first quantify the prediction error caused by the noise and the slack variable in~\eqref{eq:robustDDUNCMPC_Dynamics} and~\eqref{eq:robustDDUNCMPC_IC}.
Based on the resulting error bound, we construct a feasible candidate solution to upper bound the difference between the value functions $J_{L-1}^*(\tilde{\xi}_{t+1})-J_L^*(\tilde{\xi}_t)$. 
By combining this candidate with a local continuity bound of the IOSS Lyapunov function $W$, we then obtain Inequality~\eqref{eq:ContLikePropOfLyap}.
Note that the ``error term'' $\alpha_3(\bar{\varepsilon})$ on the right-hand side of~\eqref{eq:ContLikePropOfLyap} approaches zero if the noise bound $\bar{\varepsilon}$ approaches zero. 
This fact is crucial for establishing closed-loop practical stability w.r.t. the noise level in Section~\ref{subsec:robust_stability}.

\subsection{Closed-loop stability guarantees}\label{subsec:robust_stability}
The following result provides theoretical guarantees on recursive feasibility, input constraint satisfaction and practical exponential stability for the closed loop under Algorithm \ref{alg:robustMPC}.

\begin{theorem}[Robust stability guarantees] \label{thm:practicalStabilityThm}
	\begin{subequations}
\label{eq:PSTBound_both}
	Assume 
	$u^\mathrm{d}$ is persistently exciting of order $l+L+n$ and  $Y_L(\tilde{\xi}_{0}) \leq \bar{Y}$. 
	Then, there exists a constant $\tilde{L}_{\bar{Y}} \ge 0$ such that for any horizon length $L > \tilde{L}_{\bar{Y}}$, there exists a noise bound $\hat{\varepsilon}_L>0$ such that for any noise bound $\bar{\varepsilon} \leq \hat{\varepsilon}_L$, Problem \eqref{eq:robustDDUNCMPC} is feasible for all times $t \geq 0$, the closed-loop input satisfies the constraints, i.e., $u_t\in\mathbb{U}$, and the function $Y_L = J_L^* + W$ fulfills
		\begin{align}
			\epsilon_{\mathrm{o}} \| \tilde{\xi}_t \|_2^2 &\leq Y_L(\tilde{\xi}_t) \leq \gamma_{\bar{Y}} \| \tilde{\xi}_t \|_2^2 + \alpha_Y(\bar{\varepsilon}), \label{eq:PSTBoundOnYL}\\
			Y_L(\tilde{\xi}_{t+1}) - Y_L(\tilde{\xi}_t) &\leq - \tilde{\alpha}_L \| \tilde{\xi}_t \|_2^2 + \alpha_8(\bar{\varepsilon}), \label{eq:PSTBoundOnYLIncrease}
		\end{align}
	\end{subequations}
	with $\gamma_{\bar{Y}},\tilde{\alpha}_L> 0$, $\alpha_Y,\alpha_8\in \mathcal{K}_\infty$.
\end{theorem}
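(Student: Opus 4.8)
The plan is to turn the horizon-mismatched, one-step estimate of Lemma~\ref{lem:robustStabLemma} into a genuine Lyapunov decrease for $Y_L$ along the closed loop, and then to read off practical exponential stability from the resulting pair of bounds~\eqref{eq:PSTBound_both} by a standard argument. First I would establish the sandwich~\eqref{eq:PSTBoundOnYL}. The lower bound $\epsilon_{\mathrm{o}}\|\tilde{\xi}_t\|_2^2\le Y_L(\tilde{\xi}_t)$ follows from the IOSS inequality~\eqref{eq:IOSS} by the same summation argument along the optimal predicted trajectory as in the nominal case (cf.~\eqref{eq:NominalLyapunovBounds}); since that argument rests on detectability and $J_L^*\ge 0$, it is insensitive to the noise entering the data. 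For the upper bound I would reuse the candidate generated by the local stabilizing feedback $u_t=\tilde{K}\tilde{\xi}_t$ from Lemma~\ref{lemma:extended}: for $\tilde{\xi}_t$ in the sublevel set $\{Y_L\le\bar{Y}\}$ and $L$ large this feedback is feasible and yields an infinite-horizon cost bounded by $\gamma_{\mathrm{s}}\|\tilde{\xi}_t\|_2^2$, exactly as in~\eqref{eq:thm_nominal_gammas}. The only new contributions are the regularization terms $\lambda_\alpha\bar{\varepsilon}\|\alpha\|_2^2+\frac{\lambda_\sigma}{\bar{\varepsilon}}\|\sigma\|_2^2$ evaluated at this candidate, which I would bound by a $\mathcal{K}_\infty$ function $\alpha_Y(\bar{\varepsilon})$ of the noise level, giving $Y_L(\tilde{\xi}_t)\le\tilde{\gamma}_{\bar{Y}}\|\tilde{\xi}_t\|_2^2+\alpha_Y(\bar{\varepsilon})$.

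The core step is to bridge the horizon gap $Y_L(\tilde{\xi}_{t+1})-Y_{L-1}(\tilde{\xi}_{t+1})=J_L^*(\tilde{\xi}_{t+1})-J_{L-1}^*(\tilde{\xi}_{t+1})$ left open by Lemma~\ref{lem:robustStabLemma}, since $W$ does not depend on the horizon. Truncating the last predicted stage (which remains a valid trajectory by Lemma~\ref{lem:FundamentalLemma}) yields $J_{L-1}^*\le J_L^*$; for the reverse direction I would extend an optimal $(L-1)$-horizon solution by one stage using $\tilde{K}$. Because the system is stabilizable and detectable and $L$ is large, the optimal predicted trajectory is driven to a neighborhood of the origin of size $O(\rho^{L-1}\|\tilde{\xi}\|)$ plus noise, with $\rho\in(0,1)$, so the appended stage costs at most $\beta_L\|\tilde{\xi}\|_2^2+\mathcal{K}_\infty(\bar{\varepsilon})$ with $\beta_L\to0$ as $L\to\infty$. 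Combining $J_L^*(\tilde{\xi}_{t+1})-J_{L-1}^*(\tilde{\xi}_{t+1})\le\beta_L\|\tilde{\xi}_{t+1}\|_2^2+\mathcal{K}_\infty(\bar{\varepsilon})$ with Lemma~\ref{lem:robustStabLemma}, and bounding $\|\tilde{\xi}_{t+1}\|_2^2\le c_1\|\tilde{\xi}_t\|_2^2+c_2\bar{\varepsilon}^2$ (the window $\tilde{\xi}_{t+1}$ is a shift of $\tilde{\xi}_t$ augmented by the closed-loop I/O, which is controlled via the upper bound above), I would arrive at
\begin{align*}
Y_L(\tilde{\xi}_{t+1})-Y_L(\tilde{\xi}_t)\le-(\epsilon_{\mathrm{o}}-\beta_L c_1)\,\|\tilde{\xi}_t\|_2^2+\alpha_8(\bar{\varepsilon}).
\end{align*}
Fixing $L>\tilde{L}_{\bar{Y}}$ large enough that $\beta_L c_1<\epsilon_{\mathrm{o}}$ defines $\tilde{\alpha}_L\coloneqq\epsilon_{\mathrm{o}}-\beta_L c_1>0$ and an error term $\alpha_8\in\mathcal{K}_\infty$, which is precisely~\eqref{eq:PSTBoundOnYLIncrease}.

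It remains to secure recursive feasibility, input-constraint satisfaction, and forward invariance of $\{Y_L\le\bar{Y}\}$, which is where the second quantifier $\hat{\varepsilon}_L$ enters: having fixed $L$, I would choose $\bar{\varepsilon}\le\hat{\varepsilon}_L$ small enough that the noise terms $\alpha_Y(\bar{\varepsilon}),\alpha_8(\bar{\varepsilon})$ are dominated by the decrease margin near the boundary of the sublevel set. A case distinction on $\|\tilde{\xi}_t\|_2$ (strict decrease of $Y_L$ whenever $\tilde{\alpha}_L\|\tilde{\xi}_t\|_2^2>\alpha_8(\bar{\varepsilon})$, and a small, noise-sized neighborhood otherwise via~\eqref{eq:PSTBoundOnYL}) then gives $Y_L(\tilde{\xi}_{t+1})\le\bar{Y}$, so the candidate of Lemma~\ref{lem:robustStabLemma} stays feasible and the applied input $u_t=\bar{u}_0^*(t)\in\mathbb{U}$ by construction. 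Finally, since~\eqref{eq:PSTBound_both} exhibits $Y_L$ as a practical Lyapunov function, practical exponential stability of the origin with respect to the noise level $\bar{\varepsilon}$ follows by the standard argument, the closed loop contracting exponentially until $\|\tilde{\xi}_t\|_2^2\lesssim\alpha_8(\bar{\varepsilon})/\tilde{\alpha}_L$.

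The main obstacle is the bridging step of the second paragraph: proving that $J_L^*-J_{L-1}^*$ admits a bound of the form $\beta_L\|\cdot\|_2^2+\mathcal{K}_\infty(\bar{\varepsilon})$ with $\beta_L\to0$ requires an honest quantification of how fast the optimal predicted trajectory approaches the origin under the noisy, inexact data model, and it must be carried out so that the two approximation regimes are invoked in the correct, non-circular order, namely first enlarging $L$ to make $\beta_L$ small and only afterwards shrinking $\bar{\varepsilon}$, matching the quantifier structure $\tilde{L}_{\bar{Y}}$ then $\hat{\varepsilon}_L$ of the statement.
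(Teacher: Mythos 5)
Your overall skeleton (lower/upper bounds on $Y_L$, invoking Lemma~\ref{lem:robustStabLemma}, sublevel-set invariance to get recursion, and the quantifier order ``first enlarge $L$, then shrink $\bar{\varepsilon}$'') matches the paper, but the step you yourself flag as the main obstacle is a genuine gap, and it is not the route the paper takes. You propose to close the horizon gap $J_L^*(\tilde{\xi}_{t+1})-J_{L-1}^*(\tilde{\xi}_{t+1})$ by appending one stage of the local controller at the \emph{end} of the optimal $(L-1)$-horizon trajectory, which requires its \emph{terminal} predicted state to be of size $O(\rho^{L-1}\|\tilde{\xi}\|)$. Nothing available in this setting yields that: the only decay mechanism at hand is the IOSS summation bound $\epsilon_{\mathrm{o}}\sum_{k}\|\bar{\xi}_k\|_2^2\leq Y+\mathcal{K}_\infty(\bar{\varepsilon})$, which via a pigeonhole argument shows that \emph{some intermediate} state is of size $O(\|\tilde{\xi}\|/\sqrt{L})$, but says nothing about the last one --- indeed, without terminal ingredients the optimal controls near the end of the horizon are ``lazy'' (the terminal state is not penalized), so the terminal state is merely bounded, not small. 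Establishing your exponential claim would amount to an exponential-turnpike result, which is Riccati-type, model-based machinery; in the noisy data-driven setting it is even less accessible, because the optimal ``predicted trajectory'' of Problem~\eqref{eq:robustDDUNCMPC} is not a trajectory of any LTI system (it is generated by noisy Hankel matrices plus the slack $\sigma$), so decay arguments cannot be applied to it without exactly the kind of perturbation analysis you are trying to avoid.

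The paper's proof sidesteps this entirely: it applies the pigeonhole argument to the \emph{shifted previous} solution (represented nominally, as in the proof of Lemma~\ref{lem:robustStabLemma}) to find an intermediate index $k_x\in\mathbb{I}_{[0,L-2]}$ with $\|\bar{\xi}_{k_x}(t+1)\|_2^2=O\bigl(\|\tilde{\xi}_t\|_2^2/(L-1)\bigr)+\mathcal{K}_\infty(\bar{\varepsilon})$, cf.~\eqref{eq:xiAppendLQRBound}, and then builds the full-length candidate at time $t+1$ by splicing the local controller in \emph{there}: previous optimal input up to $k_x$, local feedback for the remaining $L-k_x$ steps. The appended cost is then $\gamma_{\mathrm{s}}\|\bar{\xi}_{k_x}(t+1)\|_2^2$, which produces the decrease margin $\tilde{\alpha}_L=\epsilon_{\mathrm{o}}-\gamma_{\mathrm{s}}(\tilde{\gamma}_{\bar{Y}}-\epsilon_{\mathrm{o}})/(\epsilon_{\mathrm{o}}(L-1))>0$ for $L$ large --- no statement about the terminal state is ever needed. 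Your plan could be repaired by performing the same splice inside your bridging step (replace the tail of the $(L-1)$-horizon solution from $k_x$ onward, rather than extending at the end), together with the data-driven bookkeeping for the spliced candidate's $\alpha$ and $\sigma$ that you currently gloss over. A second, more minor imprecision: in your Part~I the local feedback of Lemma~\ref{lemma:extended} is only guaranteed constraint-feasible on a small ball $\mathbb{B}_{\tilde{\delta}}$ (from $0\in\mathrm{int}(\mathbb{U})$), not on the whole sublevel set $\{Y_L\leq\bar{Y}\}$; the extension to the sublevel set is the standard step $\tilde{\gamma}_{\bar{Y}}=\max\{\gamma_{\mathrm{s}},(\bar{Y}-c_2\bar{\varepsilon})/\tilde{\delta}^2\}$, which your write-up skips.
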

\begin{proof}
	The proof is partitioned into three parts. The first part establishes the lower and upper bounds on $Y_L$~\eqref{eq:PSTBoundOnYL}. 
	The second part uses the continuity-like properties of the Lyapunov candidate function $Y_L$ in Lemma~\ref{lem:robustStabLemma} to contruct a nominal feasible trajectory  at the next time step $t+1$.
The third and last part uses previous results to establish the practical Lyapunov inequality \eqref{eq:PSTBoundOnYLIncrease}. \\
	\textbf{Part I}
	The lower bound in \eqref{eq:PSTBoundOnYL} is trivial to show using the IOSS property \eqref{eq:IOSS}, which holds since $W$ is an IOSS Lyapunov function for the nominal system~\eqref{eq:xiDynamics} and $\tilde{\xi}$ can be viewed as a state thereof.
	To show the upper bound in \eqref{eq:PSTBoundOnYL}, we utilize a result in a study analyzing a similar scheme to \eqref{eq:robustDDUNCMPC} with additional terminal constraints \cite{Stability&RobustnessTEC}.
In \cite[Lemma 1]{Stability&RobustnessTEC}, it is shown that that there exists a constant $\tilde{\delta}>0$, such that for any $\tilde{\xi}_t\in\mathbb{B}_{\tilde{\delta}}$ the data-driven MPC is feasible and the Lyapunov function satisfies the following bound\footnote{%
Compared to~\cite{Stability&RobustnessTEC}, the regularization terms in~\eqref{eq:robustDDUNCMPC} have an additional scaling w.r.t. $\bar{\varepsilon}$, which leads to a small modification in the bound.
}
\begin{align}
\label{eq:robust_local_upper_bound}
Y_L(\tilde{\xi}_t)\leq \gamma_{\mathrm{s}}\|\tilde{\xi}_t\|_2^2+c_2\bar{\varepsilon},
\end{align}
with constants $\gamma_{\mathrm{s}},c_2>0$. 
Since the optimization problem in~\cite{Stability&RobustnessTEC} only has an additional terminal constraint, the corresponding solution is also a feasible candidate solution to problem~\eqref{eq:robustDDUNCMPC} and thus the same arguments can be used to show that~\eqref{eq:robust_local_upper_bound} holds for any $\tilde{\xi}_t\in\mathbb{B}_{\tilde{\delta}}$. Analogous to part (ii) in the proof of~\cite[Thm.~3]{Stability&RobustnessTEC} this local upper bound ensures the upper bound~\eqref{eq:PSTBoundOnYL} for $Y_L\leq \bar{Y}$ with $\gamma_{\bar{Y}}:=\max \left\{ \gamma_{\mathrm{s}},\dfrac{\bar{Y}-c_2\bar{\varepsilon}}{\tilde{\delta}^2} \right\}$, $\alpha_Y(\bar{\varepsilon}):=c_2\bar{\varepsilon}$. \\	
	\textbf{Part II} Using the continuity-like property of the Lyapunov candidate $Y_L$ \eqref{eq:ContLikePropOfLyap} and the upper bound~\eqref{eq:PSTBoundOnYL}, the shortened Lyapunov candidate after one step $Y_{L-1}(\tilde{\xi}_{t+1})$ satisfies
	\begin{align} \label{eq:PSTContLikeOfLyap1}
		\begin{split}
			Y_{L-1}(\tilde{\xi}_{t+1}) &\leq \underbrace{Y_L(\tilde{\xi}_t)}_{\leq \gamma_{\bar{Y}} \| \tilde{\xi}_t \|_2^2 + \alpha_Y(\bar{\varepsilon})} - \epsilon_{\mathrm{o}} \| \tilde{\xi}_t \|_2^2 + \alpha_3(\bar{\varepsilon})\\
			&\leq (\gamma_{\bar{Y}} - \epsilon_{\mathrm{o}}) \| \tilde{\xi}_t \|_2^2 + \underbrace{\alpha_Y(\bar{\varepsilon}) + \alpha_3(\bar{\varepsilon})}_{\eqqcolon \alpha_4(\bar{\varepsilon})}.
		\end{split}
	\end{align}
	Note that the lower and upper bounds on $Y_L$ \eqref{eq:PSTBoundOnYL} directly imply that $\gamma_{\bar{Y}} - \epsilon_{\mathrm{o}} \geq 0$. 
	
	The next step consists of showing that for a long enough prediction horizon $L$, the shortened problem after one step is still feasible given that the original problem is feasible. To this end, a bound on the predicted states over the shortened horizon $L-1$ needs to be given. Therefore, we use conceptually similar steps as in the proof of \cite[Theorem 1]{ImplicitSolutions}.
	
	Consider the predicted state trajectory $\{\bar{\xi}_{k}(t+1)\}_{k=0}^{L-1}$ corresponding to the nominal input and output candidate trajectory \eqref{eq:NewCandInput}, \eqref{eq:NewOutputCandidate} used in the proof of Lemma~\ref{lem:robustStabLemma}, i.e., $\bar{\xi}_k(t+1)=\begin{bmatrix}\bar{u}_{[k-l,k-1]}(t+1)\\\bar{y}_{[k-l,k-1]}(t+1)	\end{bmatrix}$, $k\in\mathbb{I}_{[0,L-1]}$.
	For this nominal trajectory, the detectability property \eqref{eq:IOSS} yields
	\begin{align}
			&W(\bar{\xi}_{L-1}(t+1)) - W(\tilde{\xi}_{t+1})\\
			=&\sum_{k = 0}^{L-2} W(\bar{\xi}_{k+1}(t+1)) - W(\bar{\xi}_k(t+1))\nonumber\\
			\leq& - \epsilon_{\mathrm{o}}\sum_{k = 0}^{L-2} \| \bar{\xi}_k(t+1) \|_2^2+\underbrace{\sum_{k = 0}^{L-2} \ell(\bar{u}_k^*(t+1),\bar{y}_k^*(t+1))}_{\leq J_{L-1}^*(\tilde{\xi}_{t+1})}+\alpha_5(\bar{\varepsilon})\nonumber
	\end{align}%
	with some $\alpha_5\in\mathcal{K}_{\infty}$, where the last inequality can be shown similarly to the proof of Lemma~\ref{lem:robustStabLemma} using the fact that $\lVert \bar{y}_k(t+1)-\bar{y}_k^*(t+1)\rVert_Q$ can be bounded using class $\mathcal{K}_\infty$ functions w.r.t. $\bar{\varepsilon}$ (compare~\eqref{eq:DeltaYSquaredBound}).
	Since $W(\xi) \geq 0$, we obtain
	\begin{align}
		\epsilon_{\mathrm{o}} \sum_{k = 0}^{L-2} \| \bar{\xi}_k(t+1) \|_2^2 &\leq \underbrace{W(\tilde{\xi}_{t+1}) + J_{L-1}^*(\tilde{\xi}_{t+1})}_{= Y_{L-1}(\tilde{\xi}_{t+1})}+\alpha_5(\bar{\varepsilon}).
	\end{align}
Combining this bound with Inequality~\eqref{eq:PSTContLikeOfLyap1} yields 
	\begin{align} \label{eq:PSTSumOfPredStates}
		\epsilon_{\mathrm{o}} \sum_{k = 0}^{L-2} \| \bar{\xi}_k(t+1) \|_2^2 \leq (\gamma_{\bar{Y}} - \epsilon_{\mathrm{o}}) \| \tilde{\xi}_t \|_2^2 + \alpha_6(\bar{\varepsilon})
	\end{align}
	for $\alpha_6(\bar{\varepsilon})\coloneqq\alpha_4(\bar{\varepsilon})+\alpha_5(\bar{\varepsilon})$.
Given the sum with non-negative summands, a simple proof of contradiction shows that at least one of the summands has to be smaller than or equal to the average of the sum.
	Thus, there exists at least one $k_x \in \mathbb{I}_{[0, L-2]}$ s.\,t.
	\begin{align} \label{eq:xiAppendLQRBound}
		\| \bar{\xi}_{k_x}(t+1) \|_2^2 &\leq \frac{(\gamma_{\bar{Y}} - \epsilon_{\mathrm{o}}) \| \tilde{\xi}_t \|_2^2 + \alpha_6(\bar{\varepsilon})}{\epsilon_{\mathrm{o}} (L-1)}.
	\end{align}
Using $Y_L\leq \bar{Y}$ instead of Inequality~\eqref{eq:PSTContLikeOfLyap1}, we additionally have
	\begin{align} \label{eq:xiAppendLQRBound_bar}
		\| \bar{\xi}_{k_x}(t+1) \|_2^2 &\leq\frac{\bar{Y}+w}{\epsilon_{\mathrm{o}} (L-1)}.
	\end{align}
Here, $w>0$ is an arbitrary but fixed constant satisfying $w\geq\alpha_3(\bar{\varepsilon})+\alpha_5(\bar{\varepsilon})$.
Later in the proof, we require $\bar{\varepsilon}$ to be sufficiently small such that $w$ can be chosen arbitrarily small as well.
In particular,
	for $L > \tilde{L}_0 \coloneqq 1 + \dfrac{\bar{Y}}{\epsilon_{\mathrm{o}}\delta^2}$, there exists a small enough constant $w>0$ such that~\eqref{eq:xiAppendLQRBound} ensures $\bar{\xi}_{k_x}(t+1) \in \mathbb{B}_\delta$ with $\delta$ from the proof of Theorem~\ref{thm:NominalStabilityGuarantees}.
It can be shown that starting at $\bar{\xi}_{k_x}(t+1)$, appending the input $\bar{u}_k(t+1)=K\bar{\xi}_{k}(t+1)$, $k\geq k_x$, results in a nominally feasible trajectory for all future times, similar to the proof of Theorem \ref{thm:NominalStabilityGuarantees}.\\
	\textbf{Part III} 
In the following, we derive a bound on the value function $J_L^*(\tilde{\xi}_{t+1})$  using the candidate input trajectory defined in the previous part. 
The candidate input trajectory at time $t+1$  consists of the initial part of the previously optimal input trajectory shifted one and appended by the linear controller $u^{\mathrm{loc}}=K\xi$, i.e.,
	\begin{align} \label{eq:NewFullLengthInput}
		\bar{u}_{[-l,L-1]}(t+1) = 
		\begin{bmatrix}
			\bar{u}_{[-l+1,k_x]}^*(t)\\
			\bar{u}^{\mathrm{loc}}_{[k_x, L-1]}(t+1)
		\end{bmatrix}.
	\end{align}
	The following steps construct a corresponding feasible candidate solution for $\bar{y},\alpha,\sigma$ for Problem~\eqref{eq:robustDDUNCMPC} analogous to the proof of Lemma~\ref{lem:robustStabLemma}.
	First, the vector $\alpha$ corresponding to the chosen input candidate is constructed.
	Consider the Hankel matrix
	\begin{align}
		H_{ux} \coloneqq \begin{bmatrix}
			H_{l+L}(u^\mathrm{d})\\
			H_1(x^\mathrm{d}_{[0,N-L-l]})
		\end{bmatrix},
	\end{align}
	which has full row rank since $u^\mathrm{d}$ is persistently exciting of order $n+L$ according to \cite[Corollary 2, (iii)]{Willems05}. The sequence $x^\mathrm{d}$ is uniquely determined by the sequences $u^\mathrm{d}, y^\mathrm{d}$ since system \eqref{eq:MinimalLTIsystem} is observable and $l\geq\underline{l}$. Since $H_{ux}$ has full row rank, there exists the right-inverse
	\begin{align}
		H_{ux}^\dagger = H_{ux}^\top \left(H_{ux} H_{ux}^\top \right)^{-1}.
	\end{align}
We define $\bar{\alpha}(t+1)$ based on the right-inverse $H_{ux}^\dagger$ as
	\begin{align} \label{eq:NewFullLengthAlpha}
		\bar{\alpha}(t+1) = H_{ux}^\dagger
		\begin{bmatrix}
			\bar{u}_{[-l,L-1]}(t+1)\\
			x_{t+1-l}
		\end{bmatrix}.
	\end{align}
	The output candidate trajectory is chosen as the corresponding nominal output trajectory with the initial condition such that \eqref{eq:robustDDUNCMPC_IC} is satisfied, i.e.,
	\begin{align} \label{eq:NewFullLengthOutput}
		&\bar{y}_{[-l,-1]}(t+1) = \tilde{y}_{[t-l,t-1]}\\
		&\bar{y}_{[0,L-1]}(t+1) = H_{L}(y^\mathrm{d}) \bar{\alpha}(t+1). \nonumber
	\end{align}
	The slack variable $\sigma$ is chosen as
	\begin{align} \label{eq:NewFullLengthSlackCandidate}
		\bar{\sigma}_{[-l,L-1]}(t+1) = H_{L+l}(\varepsilon^\mathrm{d}) \bar{\alpha}(t+1) - \begin{bmatrix}
			\varepsilon_{[-l,-1]}\\
			0_{pL \times 1}
		\end{bmatrix},
	\end{align}
	satisfying the constraints \eqref{eq:robustDDUNCMPC_Dynamics} and \eqref{eq:robustDDUNCMPC_IC}.
	Define the I/O cost $\hat{J}_L(\bar{u},\bar{y})$ related to the cost from~\eqref{eq:robustDDUNCMPC} without the regularization terms as
	\begin{align}
		\hat{J}_L(\bar{u}_{[0,L-1]}(t), \bar{y}_{[0,L-1]}(t)) \coloneqq \sum_{k=0}^{L-1} \| \bar{u}_k(t) \|_R^2 + \| \bar{y}_k(t) \|_Q^2.
	\end{align}
Using exponential stability of the appended linear control input $u^{\mathrm{loc}}$, the appended cost satisfies
	\begin{align}
	\label{eq:cost_appended}
&		\hat{J}_{L-k_x}(\bar{u}_{[k_x,L-1]}(t), \bar{y}_{[k_x,L-1]}(t)) \leq \gamma_{\mathrm{s}} \| \bar{\xi}_{k_x}(t+1) \|_2^2\nonumber\\
		\overset{\eqref{eq:xiAppendLQRBound}}{\leq}& \frac{\gamma_{\mathrm{s}}(\gamma_{\bar{Y}}-\epsilon_{\mathrm{o}})}{\epsilon_{\mathrm{o}}(L-1)} \| \tilde{\xi}_t \|_2^2+ \frac{\gamma_{\mathrm{s}}}{\epsilon_{\mathrm{o}}(L-1)} \alpha_6(\bar{\varepsilon}),
	\end{align}
with $\gamma_{\mathrm{s}}>0$ as in~\eqref{eq:thm_nominal_gammas}. 	
Next, we bound the regularization terms in the cost \eqref{eq:robustDDUNCMPC_Cost} based on similar arguments as in the proof of Lemma~\ref{lem:robustStabLemma}, where we considered a similar candidate solution for the optimal control problem~\eqref{eq:robustDDUNCMPC} with horizon $L-1$. 
The following norm bound on the candidate $\bar{\alpha}(t+1)$ in \eqref{eq:NewFullLengthAlpha} is straightforward to derive:
	\begin{align}
		\| \bar{\alpha}(t+1) \|_2^2 \leq \| H_{ux}^\dagger \|_2^2 \left( \| \bar{u}_{[-l,L-1]}(t+1) \|_2^2 + \| x_{t+1-l} \|_2^2 \right).
	\end{align}
	The norm of the two-part input candidate \eqref{eq:NewFullLengthInput} is bounded by
	\begin{align} \label{eq:FullLengthInputBound}
		\| \bar{u}_{[-l,L-1]}(t+1) \|_2^2 \leq \frac{\bar{Y}}{\lambda_{\min}(R)} + \frac{\gamma_{\mathrm{s}} \delta^2}{\lambda_{\min}(R)},
	\end{align}
	using $Y_L(\tilde{\xi}_t) \leq \bar{Y}$, \eqref{eq:cost_appended}, and $\bar{\xi}_{k_x}(t+1) \in \mathbb{B}_\delta$.
	Following the same steps as in the proof of Lemma~\ref{lem:robustStabLemma} (see~\eqref{eq:app_x_t_l_2}), 
	we obtain the uniform bound
	\begin{align} \label{eq:PastStateBound}
		\| x_{t+1-l} \|_2 \leq c_x
	\end{align}
	for some $c_x > 0$.
	Therefore, the norm of the candidate $\bar{\alpha}(t+1)$ is bounded by
	\begin{align} \label{eq:NewFullLengthAlphaBound}
		\| \bar{\alpha}(t+1) \|_2^2 \overset{\eqref{eq:NewFullLengthAlpha}}{\leq}& \| H_{ux}^\dagger \|_2^2 \left( \| \bar{u}_{[-l,L-1]}(t+1) \|_2^2 + \| x_{t+1-l} \|_2^2 \right) \nonumber\\
		\overset{\eqref{eq:FullLengthInputBound},\eqref{eq:PastStateBound}}{\leq}& \underbrace{\| H_{ux}^\dagger \|_2^2 \left( \frac{\bar{Y}}{\lambda_{\min}(R)} + \frac{\gamma_{\mathrm{s}} \delta}{\lambda_{\min}(R)} + c_x^2 \right)}_{\eqqcolon c_\alpha^2}.
	\end{align}
	It can be shown exactly as in the proof of Lemma~\ref{lem:robustStabLemma} (i.e., in~\eqref{eq:bound_sigma_cand}) that the norm of the new candidate slack variable $\bar{\sigma}(t+1)$ in \eqref{eq:NewFullLengthSlackCandidate} is bounded by
	\begin{align} \label{eq:NewFullLengthSigmaBound}
		\| \bar{\sigma}(t+1) \|_2 \leq \bar{\varepsilon} b_\sigma,
	\end{align}
	with $b_\sigma>0$.
	Using the output prediction error $\delta_{k}^y(t+1) \coloneqq \bar{y}_k(t+1) - \bar{y}_{k+1}^*(t)$, 
	the I/O cost of the first $k_x$ steps is bounded by
	\begin{align}
		&\hat{J}_{k_x}(\bar{u}_{[0,k_x-1]}(t+1), \bar{y}_{[0,k_x-1]}(t+1))\\\nonumber
		&= \sum_{k=0}^{k_x-1} \| \bar{u}_k(t+1) \|_R^2 + \| \bar{y}_k(t+1) \|_Q^2\\\nonumber
		&\leq \underbrace{\sum_{k=0}^{k_x-1} \| \bar{u}_{k+1}^*(t) \|_R^2 + \| \bar{y}_{k+1}^*(t) \|_Q^2}_{\leq J_L^*(\tilde{\xi}_t) - \ell(\bar{u}_0^*(t), \bar{y}_0^*(t))}\\\nonumber
		&+ \sum_{k = 0}^{k_x-1} \| \delta_{k}^y(t+1) \|_Q^2+ 2 \sum_{k = 0}^{k_x-1} \| \bar{y}_{k+1}^*(t) \|_Q \| \delta_{k}^y(t+1) \|_Q.
	\end{align}
	Utilizing the norm bounds on the output prediction error $\delta_{k}^y(t+1)$ in the proof of Lemma~\ref{lem:robustStabLemma} (see~\eqref{eq:DeltaYSquaredBound}), the I/O cost of the first $k_x$ steps is bounded by
	\begin{align}
	\begin{split}
		&\hat{J}_{k_x}(\bar{u}_{[0,k_x-1]}(t+1), \bar{y}_{[0,k_x-1]}(t+1))\\
		\leq &J_L^*(\tilde{\xi}_t) - \ell(\bar{u}_0^*(t), \bar{y}_0^*(t))\\
		&+ \underbrace{\bar{\varepsilon} d_2 + b_\xi(\bar{\varepsilon})^2 d_3 + 2 \sqrt{\bar{Y}} \sqrt{\bar{\varepsilon}} (L-1) d_1}_{\eqqcolon \alpha_7(\bar{\varepsilon})},
	\end{split}	
	\end{align}
	using $\| \bar{y}_{k+1}^*(t) \|_Q \leq \sqrt{\bar{Y}}$, and $k_x \leq L-1$, where $d_1, d_2, d_3 > 0$, and $b_\xi \in \mathcal{K}_\infty$ are defined in Appendix~\ref{app:proof} (see~\eqref{eq:b_xi}, \eqref{eq:OutputPredErrorBound1}, and \eqref{eq:DeltaYSquaredBound}). 
	The I/O cost of the candidate solution \eqref{eq:NewFullLengthInput}, \eqref{eq:NewFullLengthOutput} over the complete horizon $L$ can thus be shown to satisfy
	\begin{align}
		&\hat{J}_L(\bar{u}_{[0,L-1]}(t), \bar{y}_{[0,L-1]}(t))\nonumber\\
		\leq &J_L^*(\tilde{\xi}_t) - \ell(\bar{u}_0^*(t), \bar{y}_0^*(t)) + \frac{\gamma_{\mathrm{s}}(\gamma_{\bar{Y}}-\epsilon_{\mathrm{o}})}{\epsilon_{\mathrm{o}}(L-1)} \| \tilde{\xi}_t \|_2^2\nonumber\\
		&+ \frac{\gamma_{\mathrm{s}}}{\epsilon_{\mathrm{o}}(L-1)} \alpha_6(\bar{\varepsilon}) + \alpha_7(\bar{\varepsilon}).
	\end{align}
	Adding the cost imposed by the regularization candidates and using the bounds \eqref{eq:NewFullLengthAlphaBound}, \eqref{eq:NewFullLengthSigmaBound}, the value function satisfies 
	\begin{align}\nonumber
		J_L^*(\tilde{\xi}_{t+1}){\leq} &J_L^*(\tilde{\xi}_t) - \ell(\bar{u}_0^*(t), \bar{y}_0^*(t)) + \frac{\gamma_{\mathrm{s}}(\gamma_{\bar{Y}}-\epsilon_{\mathrm{o}})}{\epsilon_{\mathrm{o}}(L-1)} \| \tilde{\xi}_t \|_2^2\\
		& + \frac{\gamma_{\mathrm{s}}}{\epsilon_{\mathrm{o}}(L-1)} \alpha_6(\bar{\varepsilon}) + \alpha_7(\bar{\varepsilon})
		 + \lambda_\alpha \bar{\varepsilon} c_\alpha^2 + \lambda_\sigma \bar{\varepsilon} b_\sigma^2.
	\end{align}
	Adding the storage function $W(\tilde{\xi}_{t+1})$ on both sides and using the respective continuity-like property of the IOSS Lyapunov function $W$ as in the proof of Lemma~\ref{lem:robustStabLemma} (see Part IV) leads to
	\begin{align} \label{eq:PSTLyapBound1}
		\begin{split}
			Y_L(\tilde{\xi}_{t+1}) \leq &\frac{\gamma_{\mathrm{s}} (\gamma_{\bar{Y}} - \epsilon_{\mathrm{o}})}{\epsilon_{\mathrm{o}}(L-1)} \| \tilde{\xi}_t \|_2^2 + J_L^*(\tilde{\xi}_t)\\
			& - \ell(\bar{u}_0^*(t),\bar{y}_0^*(t)) + W(\bar{\xi}_1^*(t)) + \alpha_8(\bar{\varepsilon})
		\end{split}
	\end{align}
	with
	\begin{align}
		\alpha_8(\bar{\varepsilon}) \coloneqq& \frac{\gamma_{\mathrm{s}}}{\epsilon_{\mathrm{o}} (L-1)} \alpha_6(\bar{\varepsilon}) + \alpha_2(\bar{\varepsilon}) + \alpha_7(\bar{\varepsilon})
		+ \lambda_\alpha \bar{\varepsilon} c_\alpha^2 + \lambda_\sigma \bar{\varepsilon} b_\sigma^2 
	\end{align}
	and $\alpha_2\in\mathcal{K}_\infty$ according to~\eqref{eq:IOSS_continuity_alpha} in Appendix~\ref{app:proof}.
	Using the IOSS property \eqref{eq:IOSS}, we have
	\begin{align} \label{eq:PSTDetectBound1}
		W(\bar{\xi}_1^*(t)) \leq W(\tilde{\xi}_t) - \epsilon_{\mathrm{o}} \| \tilde{\xi}_t \|_2^2 + \ell(\bar{u}_0^*(t),\bar{y}_0^*(t)).
	\end{align}
	Together with the bound on the Lyapunov candidate  \eqref{eq:PSTLyapBound1}, this leads to the practical Lyapunov inequality \eqref{eq:PSTBoundOnYLIncrease} with
	\begin{align}
		\tilde{\alpha}_L \coloneqq \epsilon_{\mathrm{o}} - \frac{\gamma_{\mathrm{s}} (\gamma_{\bar{Y}} - \epsilon_{\mathrm{o}})}{\epsilon_{\mathrm{o}} (L-1)}.
	\end{align}
	To ensure that $Y_L$ is a practical Lyapunov function, we need $\tilde{\alpha}_L>0$, which holds for a sufficiently long horizon $L$, i.e.,
	\begin{align}
        \label{eq:horizon_bound_stability}
		L > \tilde{L}_1 \coloneqq 1 + \frac{\gamma_{\mathrm{s}}(\gamma_{\bar{Y}}-\epsilon_{\mathrm{o}})}{\epsilon_{\mathrm{o}}^2}.
	\end{align}
Moreover, to show that the arguments in this proof hold recursively, we need that $Y_L\leq \bar{Y}$ holds recursively. 
Based on~\eqref{eq:PSTBound_both}, this can be ensured if
$\alpha_8(\bar{\varepsilon}) \leq \frac{\bar{Y}}{\epsilon_{\mathrm{o}}}\tilde{\alpha}_L$.
In summary, the horizon length $L$ must be such that
	\begin{align}
		L > \max \left\{ \tilde{L}_0, \tilde{L}_1 \right\} \eqqcolon \tilde{L}_{\bar{Y}}
	\end{align}
and the noise bound needs to satisfy $\bar{\varepsilon}\leq\hat{\varepsilon}_L\coloneqq \min\{ \alpha_8^{-1}(\frac{\bar{Y}}{\epsilon_{\mathrm{o}}}\tilde{\alpha}_L),(\alpha_3+\alpha_5)^{-1}(w)\}$. 
\end{proof}

Theorem~\ref{thm:practicalStabilityThm} proves that the robust data-driven MPC scheme based on Problem~\eqref{eq:robustDDUNCMPC} practically exponentially stabilizes the closed loop despite noisy output measurements.
To be precise, the inequalities \eqref{eq:PSTBoundOnYL} and \eqref{eq:PSTBoundOnYLIncrease} guarantee that $Y_L$ is a practical Lyapunov function (compare~\cite{faulwasser2018economic}), i.e., the closed-loop state trajectory converges to a region around $x=0$ whose size increases with the noise level.
In addition to persistently exciting data, closed-loop stability only requires that the prediction horizon $L$ is chosen sufficiently large and the noise bound $\bar{\varepsilon}$ is sufficiently small.
The horizon $L$ and the noise bound $\bar{\varepsilon}$ leading to closed-loop stability depend on $\bar{Y}$, i.e., on the guaranteed region of attraction of the closed loop.
This means that the closed-loop properties generally improve (i.e., the region of attraction increases and the asymptotic tracking error decreases) if $L$ is chosen larger and $\bar{\varepsilon}$ is smaller.

The proof of Theorem~\ref{thm:practicalStabilityThm} uses that the previously optimal input sequence at time $t$, resumed at time $t+1$, results in a shortened candidate trajectory of length $L-1$, which is still feasible by Lemma \ref{lem:robustStabLemma}. 
Using the continuity-like property of the value function $J_{L-1}^*$ and the IOSS Lyapunov function $W$ as described in the proof of Lemma \ref{lem:robustStabLemma}, the Lyapunov candidate function $Y_{L-1}$ at time $t+1$ cannot deviate arbitrarily from the previous value of the full-length Lyapunov candidate $Y_L$.
This is then used to show that for long enough prediction horizons and low enough noise levels, a local controller $u^{\mathrm{loc}}$ is feasible at some future predicted state $\bar{\xi}_{k_x}(t+1)$. 
Thus, a feasible new full-length candidate trajectory can be constructed by appending this local controller to the previously optimal candidate.

\begin{remark}\label{rk:relation_to_literature}
	Theorem~\ref{thm:practicalStabilityThm} can be interpreted as a first result on closed-loop stability and robustness properties of the data-driven MPC approach proposed in~\cite{DeePC1}, for which the existing literature only contains open-loop robustness results, see, e.g.,~\cite{DeePC4}.
	Results analogous to Theorem~\ref{thm:practicalStabilityThm} are provided in~\cite{Stability&RobustnessTEC} for an MPC scheme with additional stabilizing terminal ingredients or in~\cite{berberich2021linearpart2} for a tracking MPC formulation with online optimization of an artificial setpoint.
\end{remark}

\begin{remark}\label{rk:comparison_to_TEC}
The proposed data-driven MPC scheme and its theoretical analysis have multiple advantages over the existing approach from~\cite{Stability&RobustnessTEC}.
First, it is well-known that terminal equality constraints as used in~\cite{Stability&RobustnessTEC} can lead to poor robustness properties if compared to a scheme without terminal ingredients~\cite{rawlings2017model}.
Indeed, this will be illustrated with a numerical example in Section~\ref{sec:num}.
Another important advantage of the presented approach over the one in~\cite{Stability&RobustnessTEC} is that we provide closed-loop stability and robustness guarantees for the \emph{one-step} MPC scheme in Algorithm~\ref{alg:robustMPC}, whereas~\cite{Stability&RobustnessTEC} only provides such guarantees for a \emph{multi-step} MPC scheme due to the terminal equality constraints. 
Furthermore, in the important special case of open-loop stable systems, we can provide a global region of attraction (cf. Remark~\ref{rk:RoA}), which is typically not possible with a terminal equality constraint.
The only price we have to pay for these advantages is a sufficiently long prediction horizon, cf. Theorem~\ref{thm:practicalStabilityThm}, which may potentially increase the data requirements as well as the computational complexity.
Finally, we note that the MPC proposed in~\cite{Stability&RobustnessTEC} included the \emph{non-convex} constraint 
\begin{align}\label{eq:non_convex}
\lVert\sigma(t)\rVert_{\infty}\leq\bar{\varepsilon}(1+\lVert\alpha(t)\rVert_1),
\end{align}
which was required to prove closed-loop stability in~\cite{Stability&RobustnessTEC}.
On the contrary, Problem~\eqref{eq:robustDDUNCMPC} as well as the theoretical closed-loop guarantees derived in the remainder of this section do not require such a non-convex constraint.
In particular, given the chosen regularization, the slack variable $\sigma$ is automatically small if the noise level is small.
However, it should be pointed out that the stability results for data-driven MPC with terminal equality constraints in~\cite{Stability&RobustnessTEC} remain true if the non-convex constraint~\eqref{eq:non_convex} is dropped but the regularization of $\sigma(t)$ takes the form $\frac{\lambda_{\sigma}}{\bar{\varepsilon}}\lVert\sigma(t)\rVert_2^2$.
\end{remark}

\begin{remark}
	While we omit output constraints in~\eqref{eq:robustDDUNCMPC} for simplicity, these may be accounted for using soft constraints, i.e., augmenting the cost function with a quadratic penalty function which imposes high costs on predicted outputs outside their (polytopic) constraint sets~\cite{zeilinger2014soft}. 
	In fact, the qualitative theoretical results in Theorem~\ref{thm:practicalStabilityThm} remain true if such soft constraints are added, thus providing a data-driven MPC approach which is simple to implement, admits rigorous closed-loop stability guarantees, and incentivizes constraint satisfaction. 
Alternatively, robust satisfaction of output constraints can be ensured by using an additional constraint tightening. 
 Such a constraint tightening method has been proposed in~\cite{berberich2020constraints} for a data-driven MPC with noisy data and we conjecture that an analogous constraint tightening can also be constructed for the MPC approach in the present paper.
\end{remark}

\begin{remark}
The constants in~\eqref{eq:PSTBoundOnYL} and~\eqref{eq:PSTBoundOnYLIncrease} which determine the guaranteed closed-loop performance are analogous to the corresponding values for the nominal MPC scheme in Theorem~\ref{thm:NominalStabilityGuarantees}, i.e., those appearing in~\eqref{eq:NominalLyapunov}.
Bounds on the constants in Theorem~\ref{thm:NominalStabilityGuarantees} can be computed from noisy data by using robust dissipativity analysis and controller design methods from~\cite{ProvablyRobustVerificationOfDissipativity,berberich2020combining}.
This means that the performance constants in Theorem~\ref{thm:practicalStabilityThm} and hence, bounds on a sufficiently long prediction horizon $L_{\bar{Y}}$ leading to closed-loop stability can also be computed based only on measured data affected by noise.
However, the resulting horizon bounds can be very conservative and the upper bound $\hat{\varepsilon}_L$ on the noise level ensuring closed-loop stability can generally not be computed without additional model knowledge, similar to the results on data-driven MPC with terminal equality constraints by~\cite{Stability&RobustnessTEC}.
Nevertheless, our results provide additional insights into the influence of system and design parameters on the closed-loop behavior.
Further exploring quantitative guidelines for tuning the involved parameters, in particular the prediction horizon $L$, remains an interesting issue for future research.
\end{remark}

\begin{remark}
As an alternative to our direct approach, the measured data can also be used to first estimate an input-output model of the underlying system and then apply MPC techniques.
This indirect data-driven control approach is known in the literature as subspace predictive control (SPC)~\cite{favoreel1999spc}.
The theoretical investigation of the relation between direct and indirect data-driven MPC is a largely open research problem.
First, we note that our results in Section~\ref{sec:nom} for nominal data-driven MPC are equally applicable to SPC if the model is identified exactly.
Recent work in~\cite{doerfler2021bridging} shows that the open-loop optimal control problem for data-driven MPC as considered in this paper is in fact a convex relaxation of the corresponding problem in SPC.
Additionally, direct data-driven and indirect model-based MPC have been compared for practical applications, e.g., in~\cite{carlet2020data}.
Generally, the closed-loop performance of either data-driven or model-based MPC depends on the accuracy of the involved ``model'' used to predict future trajectories.
While deriving tight error bounds in system identification based on a noisy data trajectory of finite length is a challenging problem, this paper provides a theoretical analysis of direct data-driven MPC for which the impact of noise on the prediction error is explicitly quantified, cf. the proof of Lemma~\ref{lem:robustStabLemma}.
In particular, the literature on SPC does not provide closed-loop guarantees under assumptions comparable to those in the present paper.
However, we conjecture that our proof of robust stability in Theorem~\ref{thm:practicalStabilityThm} can be adapted to SPC, assuming that a suitable bound on the identification error is available.
Another noteworthy feature of direct data-driven MPC is that online data updates can make the approach applicable to nonlinear systems, even providing closed-loop stability guarantees~\cite{berberich2021linearpart2}.
\end{remark}

\section{Numerical example}
\label{sec:num}
In the following, we apply the robust data-driven MPC scheme presented in Section~\ref{sec:robust} to System~\eqref{eq:MinimalLTIsystem} with
\begin{align*}
A&=\begin{bmatrix}0.9749&-0.0135\\0.0004&0.9888\end{bmatrix},\>
B=10^{-4}\cdot\begin{bmatrix}0.041\\5.934\end{bmatrix},\\
C&=\begin{bmatrix}0&1\end{bmatrix},\>D=0.
\end{align*}
This system corresponds to the linearization of the nonlinear continuous stirred-tank reactor (CSTR) considered in~\cite{mayne2011tube} with linearization point $\begin{bmatrix}0.9831\\0.3918\end{bmatrix}$ and sampling time $0.5$.
Our goal is stabilization of the origin
while satisfying input constraints $u_t\in\mathbb{U}=[-0.1,0.1]$ for $t\geq0$.
We assume that one noisy input-output trajectory $\{u_k^{\mathrm{d}},\tilde{y}_k^{\mathrm{d}}\}_{k=0}^{N-1}$ of the linearized CSTR is available with data length $N=200$.
This trajectory is generated by sampling the input uniformly from $u_k^{\mathrm{d}}\in\mathbb{U}$ and the output measurement noise affecting the data and the initial conditions uniformly from $\varepsilon_k^d\in[-\bar{\varepsilon},\bar{\varepsilon}]$ with $\bar{\varepsilon}=0.001$.

Figure~\ref{fig:CSTR} displays the closed-loop input-output trajectory resulting from the application of the robust data-driven MPC scheme (denoted by UCON, i.e., ``unconstrained''), compare Algorithm~\ref{alg:robustMPC}, to the linearized CSTR, where the design parameters are chosen as
\begin{align*}
&L=20,\>Q=I,\>R=10^{-2}I,\>\lambda_{\alpha}\bar{\varepsilon}=10^{-2},\>\frac{\lambda_{\sigma}}{\bar{\varepsilon}}=10^5.
\end{align*}
Note that the closed-loop input-output trajectory indeed converges close to the origin despite the noisy output measurements, i.e., the presented MPC scheme solves the control task.
Figure~\ref{fig:CSTR} also shows the closed loop resulting from the data-driven MPC scheme with terminal equality constraints (denoted by TEC), which was developed in~\cite{Stability&RobustnessTEC}, with the same parameters as above and omitting the non-convex constraint, compare Remark~\ref{rk:comparison_to_TEC}.
The input computed via this scheme is much more aggressive and fluctuating, showcasing the lack of robustness caused by terminal equality constraints.
For a quantitative comparison of the two MPC schemes, we compute for each closed-loop trajectory displayed in Figure~\ref{fig:CSTR} the cost function
\begin{align}\label{eq:cost_example}
\sum_{t=0}^{500}\lVert u_t\rVert_R^2+\lVert y_t\rVert_Q^2.
\end{align}
The closed-loop cost~\eqref{eq:cost_example} for the scheme in~\cite{Stability&RobustnessTEC} is $3.3\%$ larger than that of Algorithm~\ref{alg:robustMPC}.
Thus, dropping terminal equality constraints as we propose in this paper can not only avoid heavy input fluctuations but also leads to quantitative performance improvements.
To summarize, the MPC approach presented in this paper can be superior in practice if compared to existing approaches while at the same time possessing strong theoretical guarantees on closed-loop stability and robustness.

\begin{figure}
		\begin{center}
		\subfigure[Closed-loop input $u$]
		{\includegraphics[width=0.49\textwidth]{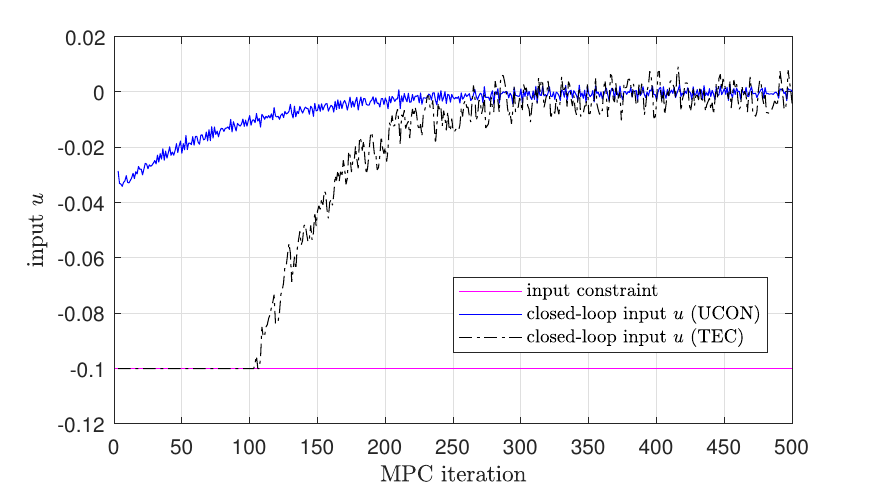}}
		\subfigure[Closed-loop output $y$]
		{\includegraphics[width=0.49\textwidth]{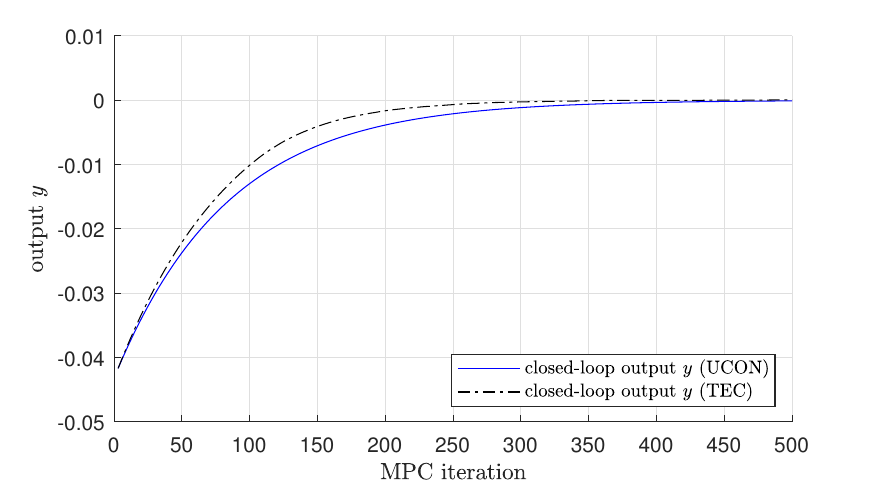}}
		\end{center}
		\caption{Closed-loop input and output, resulting from the application of the robust data-driven MPC scheme without terminal ingredients (UCON), compare Algorithm~\ref{alg:robustMPC}, and with terminal equality constraints (TEC), compare~\cite{Stability&RobustnessTEC}.}	\label{fig:CSTR}
\end{figure}
\section{Conclusion}
\label{sec:conclusion}
In this paper, we analyzed closed-loop properties of a simple data-driven MPC scheme without stabilizing terminal ingredients.
This MPC scheme  does not require any model knowledge but only one input-output trajectory which may be affected by noise. 
Our main contribution is a proof of closed-loop practical exponential stability under this MPC scheme. 
In contrast to existing works on data-driven MPC with noisy data, we are not limited to open-loop robustness guarantees~\cite{DeePC4} and we require no terminal equality constraints~\cite{Stability&RobustnessTEC}, which can potentially deteriorate robustness.
Moreover, we illustrated the advantages of the considered data-driven MPC with a numerical example. 
Interesting topics for future research include an extension of the presented results to classes of nonlinear systems and an in-depth comparison of closed-loop properties in data-driven and model-based MPC.

\bibliographystyle{ieeetran}  
\bibliography{Literature}  
%
\appendix
\subsection{Proof of Lemma \ref{lem:robustStabLemma}}
\label{app:proof}
The proof is structured as follows.
In Part I, the one-step prediction error in the state is bounded based on the noise level $\bar{\varepsilon}$. In Part II, after one step of Algorithm \ref{alg:robustMPC} at time $t+1$, a new, shortened candidate is constructed based on the previous solution. 
Part III and Part IV consist of using the previously established prediction error bound and the new shortened candidate to establish continuity-like properties of the value function $J_L^*$ and the IOSS Lyapunov function $W$, respectively.
The desired continuity-like property of the Lyapunov candidate $Y_L$ follows directly in Part~V.\\
\textbf{Part I } 
First, denote the nominal one-step prediction of the extended state $\xi$ from the previously optimal trajectory a time $t$ as
	\begin{align} \label{eq:NominalOneStepStatePrediction}
			&\hat{\xi}_1^*(t) \coloneqq 
			\begin{bmatrix}
			\begin{bmatrix} 0_{m \times m} \ I_{lm} \ 0_{lm \times m(L-1)} \end{bmatrix}
			H_{L+l}(u^{\mathrm{d}})\alpha^*(t)\\
			\begin{bmatrix} 0_{p \times p} \ I_{lp} \ 0_{lp \times p(L-1)} \end{bmatrix}
			H_{L+l}(y^{\mathrm{d}})\alpha^*(t)
			\end{bmatrix}\\
			&\stackrel{\eqref{eq:robustDDUNCMPC_Dynamics}, \eqref{eq:robustDDUNCMPC_IC}}{=}\begin{bmatrix}
				u_{[t+1-l,t-1]}\\
				\bar{u}_0^*(t)\\
				\tilde{y}_{[t+1-l,t-1]}\\
				\bar{y}^*_0(t)
			\end{bmatrix}\nonumber\\
			&+ 
			\begin{bmatrix}
				0_{lm \times 1}\\
				- \begin{bmatrix} 0_{lp \times p} \ I_{lp} \ 0_{lp \times p(L-1)} \end{bmatrix}
				H_{l+L}(\varepsilon^\mathrm{d}) \alpha^*(t) + \sigma_{[-l+1,0]}^*(t)
			\end{bmatrix}. \nonumber
	\end{align}
	Next, define the one-step state prediction error $\delta^\xi_1 (t)$ as
	\begin{align} 
\label{eq:OneStepStatePredErrorExpanded}
		&\delta^\xi_1 (t) \coloneqq \xi_{t+1} - \hat{\xi}^*_1 (t)
=
		\begin{bmatrix}
			0_{lm \times 1}\\
			- \varepsilon_{[t+1-l,t-1]}\\
			y_t - \bar{y}_0^*(t)
		\end{bmatrix}\\
		& + 
		\begin{bmatrix}
			0_{lm \times 1}\\ 
			\begin{bmatrix}
			0_{lp\times p}& 	I_{lp} & 0_{lp \times p(L-1)}
			\end{bmatrix}
			H_{l+L}(\varepsilon^\mathrm{d}) \alpha^*(t)
			- \sigma_{[-l+1,0]}^*(t)
		\end{bmatrix}. \nonumber
	\end{align}
	The predicted output $\bar{y}_0^*(t)$ by definition is a part of the predicted trajectory
	\begin{align}
		\bar{y}_0^*(t) = \begin{bmatrix} 0_{p \times pl} \ I_p \ 0_{p \times p(L-1)} \end{bmatrix} \bar{y}^*(t),
	\end{align}
	where the predicted trajectory $\bar{y}^*(t)$ is given in the MPC problem \eqref{eq:robustDDUNCMPC}.
	Moreover, we obtain
	\begin{align} \label{eq:PredErrorPredOutput}
		\begin{split}
			\bar{y}^*(t) + \sigma^*(t) - H_{l+L}(\varepsilon^\mathrm{d}) \alpha^*(t) = H_{l+L}(y^\mathrm{d}) \alpha^*(t).
		\end{split}
	\end{align}
	Note that $H_{l+L}(y^\mathrm{d}) \alpha^*(t)$ is a trajectory of the system according to Lemma \ref{lem:FundamentalLemma}. 
	We define the following variable as an initial condition to the considered trajectory
	\begin{align} \label{eq:PredErrorIC}
			&\hat{\xi}_0(t) =\\\nonumber
			&\begin{bmatrix}
				u_{[t-l,t-1]}\\
				\tilde{y}_{[t-l,t-1]} + \sigma_{[-l,-1]}^*(t) - 
				\begin{bmatrix}
					I_{lp} \ 0_{lp \times Lp}
				\end{bmatrix}
				H_{l+L}(\varepsilon^\mathrm{d}) \alpha^*(t)
			\end{bmatrix}.
	\end{align}
	Using equivalence of data-driven and state space models (cf. Lemma \ref{lem:FundamentalLemma}), we obtain
	\begin{align}
		[0_{p\times lp} \ I_p \ 0_{p\times(L-1)p}]H_{l+L}(y^d)\alpha^*(t)=\tilde{C}\hat{\xi}_0(t)+\tilde{D}\bar{u}_0^*(t).
	\end{align}
		Combining this condition with~\eqref{eq:PredErrorPredOutput}, we obtain the following expression for the predicted output $\bar{y}_0^*(t)$:
		\begin{align}
			\bar{y}_0^*(t) =& \tilde{C} \hat{\xi}_0(t) + \tilde{D} \bar{u}_0^*(t) - \sigma_0^*(t)\\
			&+[0_{p\times lp} \ I_p \ 0_{p\times (L-1)p}] H_{l+L}(\varepsilon^d)\alpha^*(t).\nonumber
	\end{align}
	Using \eqref{eq:PredErrorIC} yields the following expression for the predicted output $\bar{y}_0^*(t)$: 
	\begin{align} \label{eq:PredErrorPerfectPredictionPart}
			\bar{y}_0^*(t) =& \tilde{C} \left( \xi_t + 
			\begin{bmatrix}
				0_{lm \times 1}\\
				\varepsilon_{[t-l,t-1]}
			\end{bmatrix} 
			\right) + \tilde{D} \bar{u}_0^*(t)\\
			&+ \tilde{C} 
			\begin{bmatrix}
				0_{lm \times 1}\\
				\sigma_{[-l,-1]}^*(t) - 
				\begin{bmatrix}
					I_{lp} \ 0_{lp \times Lp}
				\end{bmatrix}
				H_{l+L}(\varepsilon^\mathrm{d}) \alpha^*(t)
			\end{bmatrix}\nonumber\\
			&+ \begin{bmatrix} 0_{p \times lp} \ I_p \ 0_{p \times (L-1)p} \end{bmatrix} H_{l+L}(\varepsilon^\mathrm{d}) \alpha^*(t) - \sigma_0^*(t) .\nonumber
	\end{align}
	Therefore, combining~\eqref{eq:PredErrorPerfectPredictionPart} with~\eqref{eq:xiDynamics},
	the output prediction error is equal to
	\begin{align} \label{eq:PredErrorOutputError}
			&y_t - \bar{y}_0^*(t)\\
			=&	- \tilde{C} 
			\begin{bmatrix}
				0_{lm \times 1}\\
				\varepsilon_{[t-l,t-1]} +
				\sigma_{[-l,-1]}^*(t) - 
				\begin{bmatrix}
					I_{lp} \ 0_{lp \times Lp}
				\end{bmatrix}
				H_{l+L}(\varepsilon^\mathrm{d}) \alpha^*(t)
			\end{bmatrix} \nonumber \\
			&- \begin{bmatrix} 0_{p \times lp} \ I_p \ 0_{p \times  (L-1)p} \end{bmatrix} H_{l+L}(\varepsilon^\mathrm{d}) \alpha^*(t) + \sigma_0^*(t). \nonumber
	\end{align}
	In the following, we derive bounds for the terms used in~\eqref{eq:OneStepStatePredErrorExpanded} and \eqref{eq:PredErrorOutputError}.
	The following derivation will repeatedly use the equivalence property of norms \cite[p. 72]{MatrixComputations}, i.e.,  $\|d\|_2\leq \sqrt{k}\|d\|_\infty$ for any $d\in\mathbb{R}^k$, to apply the noisy bound $\|\varepsilon\|_\infty\leq \bar{\varepsilon}$.
	To derive bounds on $\sigma^*(t)$ and $\alpha^*(t)$, we make use the fact that $Y_L(\tilde{\xi}_t) \leq \bar{Y}$ and therefore $J_L^*(\tilde{\xi}_t) \leq \bar{Y}$, which implies $ \dfrac{\lambda_\sigma}{\bar{\varepsilon}} \| \sigma^*(t) \|_2^2 \leq \bar{Y}$ using the value function $J_L^*$ in \eqref{eq:robustDDUNCMPC}. Hence, we obtain 
	\begin{align} \label{eq:PredErrorSigmaBound}
			 \| \sigma^*(t) \|_2 \leq \sqrt{\frac{\bar{Y} \bar{\varepsilon}}{\lambda_\sigma}}.
	\end{align}
	Using the same arguments, it can be shown that
	\begin{align} \label{eq:PredErrorAlphaBound}
		\bar{\varepsilon}\| \alpha^*(t) \|_2 \leq \sqrt{\frac{\bar{\varepsilon}\bar{Y}}{\lambda_\alpha }}.
	\end{align}
The terms in~\eqref{eq:PredErrorOutputError} regarding the noisy data can be bounded using
	\begin{align} \label{eq:PredErrorNoisyDataPart2}
			&\| \begin{bmatrix} 0_{p \times lp} \ I_p \ 0_{p \times (L-1)p} \end{bmatrix} H_{l+L}(\varepsilon^\mathrm{d}) \alpha^*(t) \|_2\\
			\leq &\| \begin{bmatrix} 0_{p \times lp} \ I_p \ 0_{p \times (L-1)p} \end{bmatrix} H_{l+L}(\varepsilon^\mathrm{d}) \|_2 \| \alpha^*(t) \|_2.\nonumber
	\end{align}
The noise-dependent term in \eqref{eq:PredErrorNoisyDataPart2} is bounded as
	\begin{align}
		\| \begin{bmatrix} 0_{p \times lp} \ I_p \ 0_{p \times (L-1)p} \end{bmatrix} H_{l+L}(\varepsilon^\mathrm{d}) \|_\infty \leq \bar{\varepsilon}N_L,
	\end{align}
	where $N_L:=(N-L-l+1)$ corresponds to the number of columns in the Hankel matrix.
	Using equivalence of norms, we obtain
	\begin{align} \label{eq:PredErrorNoisyDataPart}
			&\| \begin{bmatrix} 0_{p \times lp} \ I_p \ 0_{p \times (L-1)p} \end{bmatrix} H_{l+L}(\varepsilon^\mathrm{d}) \|_2\\\nonumber
			&\leq \sqrt{p} \| \begin{bmatrix} 0_{p \times lp} \ I_p \ 0_{p \times (L-1)p} \end{bmatrix} H_{l+L}(\varepsilon^\mathrm{d}) \|_\infty\leq \bar{\varepsilon} \sqrt{p}N_L.
	\end{align}
	Similarly, the other noisy data matrix in~\eqref{eq:PredErrorOutputError} satisfies
		\begin{align}
		\label{eq:PredErrorNoisyDataPart_lp}
			&\| \begin{bmatrix} I_{lp} \ 0_{lp \times Lp} \end{bmatrix} H_{l+L}(\varepsilon^\mathrm{d}) \|_2\nonumber\\
			&\leq \sqrt{lp} \| \begin{bmatrix} I_{lp} \ 0_{lp \times Lp} \end{bmatrix} H_{l+L}(\varepsilon^\mathrm{d}) \|_\infty\leq \bar{\varepsilon}N_L \sqrt{lp} .
	\end{align}
Using the bounds~\eqref{eq:PredErrorSigmaBound}, \eqref{eq:PredErrorAlphaBound}, \eqref{eq:PredErrorNoisyDataPart2}, \eqref{eq:PredErrorNoisyDataPart} in~\eqref{eq:PredErrorOutputError}, we arrive at the following bound for the output-prediction error
	\begin{align} \label{eq:OutputPredErrorNormBound}
		&\| y_t - \bar{y}_0^*(t) \|_2 \\
		\leq 		& \bar{\varepsilon}\|\tilde{C}\|\sqrt{pl}
		+\sqrt{\bar{\varepsilon}}\sqrt{\bar{Y}}\left(\dfrac{1+\|\tilde{C}\|_2}{\sqrt{\lambda_\sigma}}+\dfrac{N_L(\sqrt{p}+\|\tilde{C}\|_2 \sqrt{pl})}{\sqrt{\lambda_\alpha}}\right).\nonumber
	\end{align}
Using~\eqref{eq:PredErrorSigmaBound}, \eqref{eq:PredErrorAlphaBound}, and a bound analogous to~\eqref{eq:PredErrorNoisyDataPart_lp}, the one-step error bound~\eqref{eq:OneStepStatePredErrorExpanded} can be simplified to
	\begin{align}
		\| \delta_1^\xi(t) \|_2 \leq&\| y_t - \bar{y}_0^*(t) \|_2+ \bar{\varepsilon} \sqrt{p(l-1)} \\
		&+ \sqrt{\bar{\varepsilon}}\sqrt{\bar{Y}} \left( \frac{1}{\sqrt{\lambda_\sigma}} +  \frac{\sqrt{lp}N_L}{\sqrt{\lambda_\alpha}} \right). \nonumber
	\end{align}
Using condition~\eqref{eq:OutputPredErrorNormBound}, this bound reduces to
	\begin{align} \label{eq:OneStepPredictionErrorNormBound}
		\| \delta^\xi_1 (t) \|_2 \leq b_\xi(\bar{\varepsilon}),
	\end{align}
	where the class $\mathcal{K}_\infty$-function $b_\xi(\bar{\varepsilon})$ is given by
	\begin{align} \label{eq:b_xi}
			&b_\xi(\bar{\varepsilon}) \coloneqq \bar{\varepsilon} (\|\tilde{C}\|_2 \sqrt{lp} + \sqrt{p(l-1)} )\\
			&+\sqrt{\bar{\varepsilon}}\sqrt{\bar{Y}}\left(
			 \dfrac{2+\|\tilde{C}\|_2}{\sqrt{\lambda_\sigma}}
			 +\dfrac{N_L(\sqrt{p}+(\|\tilde{C}\|_2+1) \sqrt{pl})}{\sqrt{\lambda_\alpha}}\right).\nonumber
	\end{align}
\textbf{Part II }
	First, we construct a new, feasible input candidate at time $t+1$ of shortened length $L-1$ by resuming the previous sequence from time $t$, i.\,e.
	\begin{align} \label{eq:NewCandInput}
		\bar{u}_{[-l,L-2]}(t+1) = \bar{u}^*_{[-l+1,L-1]}(t),
	\end{align}
	and we proceed by finding the corresponding $\alpha$ according to Lemma \ref{lem:FundamentalLemma}. Consider the Hankel matrix
	\begin{align}
		H_{ux} \coloneqq \begin{bmatrix}
			H_{l+L-1}(u^\mathrm{d})\\
			H_1(x^\mathrm{d}_{[0,N-L-l+1]})
		\end{bmatrix},
	\end{align}
	which has full row rank since $u^\mathrm{d}$ is persistently exciting of order $n+L$ according to \cite[Corollary 2, (iii)]{Willems05}. The sequence $x^\mathrm{d}$ is uniquely determined by the sequences $u^\mathrm{d}, y^\mathrm{d}$ since System \eqref{eq:MinimalLTIsystem} is observable and $l\geq\underline{l}$. 
	At time $t+1$, the last $l$ I/O measurements $\xi_{t+1}$ invoke a unique initial condition $x_{t+1-l}$. Together with the new input sequence \eqref{eq:NewCandInput}, we obtain with Lemma \ref{lem:FundamentalLemma}
	\begin{align}
		\begin{bmatrix}
			\bar{u}_{[-l,L-2]}(t+1)\\
			x_{t+1-l}
		\end{bmatrix}
		= H_{ux} \bar{\alpha}(t+1),
	\end{align}
	where $\bar{\alpha}$ represents the new candidate vector $\alpha$ from Lemma \ref{lem:FundamentalLemma}.
	Since $H_{ux}$ has full row rank, there exists the right-inverse
	\begin{align}
		H_{ux}^\dagger = H_{ux}^\top \left(H_{ux} H_{ux}^\top \right)^{-1},
	\end{align}
	which can be used to compute the vector $\bar{\alpha}(t+1)$ as
	\begin{align} \label{eq:NewCandidateAlpha}
		\bar{\alpha}(t+1) = H_{ux}^\dagger
		\begin{bmatrix}
			\bar{u}_{[-l,L-2]}(t+1)\\
			x_{t+1-l}
		\end{bmatrix}.
	\end{align}
	The output candidate is chosen as the corresponding nominal output trajectory, i.\,e.
	\begin{align} \label{eq:NewOutputCandidate}
		\bar{y}_{[-l,L-2]}(t+1) = H_{L+l-1}(y^\mathrm{d}) \bar{\alpha}(t+1).
	\end{align}
	In order to satisfy the constraints \eqref{eq:robustDDUNCMPC_Dynamics} and \eqref{eq:robustDDUNCMPC_IC}, the slack variable candidate is chosen as
	\begin{align} \label{eq:NewSlackCandidate}
		\bar{\sigma}_{[-l,L-2]}(t+1) = H_{L+l-1}(\varepsilon^\mathrm{d}) \bar{\alpha}(t+1) - \begin{bmatrix}
			\varepsilon_{[t+1-l,t]}\\
			0_{p(L-1) \times 1}
		\end{bmatrix}.
	\end{align}
	\textbf{Part III} 
	For the next step, a bound of the deviation between the previous candidate trajectory and the new shortened one is shown. The new output candidate solution is chosen to be a trajectory of the nominal system in \eqref{eq:NewOutputCandidate}. Therefore, the output candidate trajectory can be equivalently rewritten using the dynamics of the extended system \eqref{eq:xiDynamics} as
	\begin{align} \label{eq:ModelOutputtPlusOne}
			\bar{y}_k(t+1) =& \tilde{C} \tilde{A}^k \xi_{t+1}
			+ \tilde{C} \sum_{j = 0}^{k-1} \tilde{A}^j \tilde{B} \bar{u}_j(t+1)
			+ \tilde{D}\bar{u}_k(t+1),
	\end{align}
	for $k \in\mathbb{I}_{[0,L-2]}$. Using the previous optimal solution from \eqref{eq:robustDDUNCMPC_Dynamics} and \eqref{eq:NominalOneStepStatePrediction}, we similarly obtain
	\begin{align} \label{eq:ModelOutputt}
			\bar{y}_{k+1}^*(t) =& \tilde{C} \tilde{A}^k \hat{\xi}_1^*(t) + \tilde{C} \sum_{j = 0}^{k-1} \tilde{A}^j \tilde{B} \bar{u}_{j+1}^*(t) + \tilde{D} \bar{u}_{k+1}^*(t)\nonumber\\
			&+ \begin{bmatrix} 0_{p \times pk} \ I_p \ 0_{p \times p(l+L-k-1)} \end{bmatrix} H_{L+l}(\varepsilon^\mathrm{d}) \alpha^*(t)\nonumber\\
			&- \sigma^*_{k+1}(t),
	\end{align}
	for $k \in\mathbb{I}_{[0,L-2]}$. Subtracting the old optimal solution~\eqref{eq:ModelOutputt} from the new one \eqref{eq:ModelOutputtPlusOne}, and using that the input candidate was shifted \eqref{eq:NewCandInput}, the error between the two output trajectories is given by
	\begin{align}
	\label{eq:def_delta_y}
			\underbrace{\bar{y}_k(t+1) - \bar{y}_{k+1}^*(t)}_{\eqqcolon \delta_k^y(t+1)} \stackrel{\eqref{eq:OneStepStatePredErrorExpanded}}{=}& \tilde{C} \tilde{A}^k \delta_1^\xi(t)\\
			- \begin{bmatrix} 0_{p \times pk} \ I_p \ 0_{p \times p(l+L-k-1)} \end{bmatrix}& H_{L+l}(\varepsilon^\mathrm{d}) \alpha^*(t) + \sigma^*_{k+1}(t).\nonumber
	\end{align}
The $Q$-weighted norm of the output error $\delta_k^y(t+1)$ can be bounded using
	\begin{align}
			&\| \delta_k^y(t+1) \|_Q\\
			 \leq& \| \tilde{C} \tilde{A}^k \|_Q \| \delta_1^\xi(t) \|_2
			+ \sqrt{\lambda_{\max}(Q)} \big( \| \sigma_{k+1}^*(t) \|_2\nonumber\\
			&+  \| \begin{bmatrix} 0_{p \times pk} \ I_p \ 0_{p \times p(l+L-k-1)} \end{bmatrix}H_{L+l}(\varepsilon^\mathrm{d}) \|_2 \| \alpha^*(t) \|_2 \big).\nonumber
	\end{align}
	Let $c_1, \rho_1 > 0$ such that $\| \tilde{C} \tilde{A}^{k} \|_Q \leq c_1 \rho_1^k$. 
Using this bound, conditions~\eqref{eq:PredErrorSigmaBound}, \eqref{eq:PredErrorAlphaBound}, \eqref{eq:OneStepPredictionErrorNormBound}, and a bound analogous to~\eqref{eq:PredErrorNoisyDataPart}, the output prediction error can be bounded as
	\begin{align} \label{eq:OutputPredErrorBound1}
\| \delta_k^y(t+1) \|_Q \leq& c_1 \rho_1^k b_\xi(\bar{\varepsilon})\\
& +\sqrt{\bar{\varepsilon}}\underbrace{\sqrt{\bar{Y}}\sqrt{\lambda_{\max}(Q)}\left( \dfrac{1}{\sqrt{\lambda_\sigma}} +\dfrac{N_L \sqrt{lp}}{\sqrt{\lambda_{\alpha}}}\right)}_{\eqqcolon d_1}.\nonumber
	\end{align}
	Squaring the bound \eqref{eq:OutputPredErrorBound1} and using the inequality $(a+b)^2 \leq 2a^2+2b^2$ leads to
	\begin{align}
\label{eq:OutputPredErrorBound2_square}
\| \delta_k^y(t+1) \|_Q^2 \leq 2 c_1^2 \rho_1^{2k} b_\xi(\bar{\varepsilon})^2 + 2 \bar{\varepsilon} d_1^2.
	\end{align}
	Summing up over $L-1$ steps leads to
	\begin{align} \label{eq:DeltaYBound}
			\sum_{k = 0}^{L-2} \| \delta_k^y(t+1) \|_Q \stackrel{\eqref{eq:OutputPredErrorBound1}}{\leq}& \sqrt{\bar{\varepsilon}} (L-1) d_1+ b_\xi(\bar{\varepsilon}) c_1 \sum_{k=0}^{L-2}\rho_1^{k},\\
 \label{eq:DeltaYSquaredBound}
			\sum_{k = 0}^{L-2} \| \delta_k^y(t+1) \|_Q^2 \stackrel{\eqref{eq:OutputPredErrorBound2_square}}{\leq}& \bar{\varepsilon} \underbrace{2(L-1) d_1^2}_{\eqqcolon d_2}
			 + b_\xi(\bar{\varepsilon})^2 \underbrace{2 c_1^2 \sum_{k=0}^{L-2}\rho_1^{2k}}_{\eqqcolon d_3}.
	\end{align}
	For the next step, a norm bound on the new candidate $\bar{\alpha}(t+1)$ as in \eqref{eq:NewCandidateAlpha} is shown. Using the lag of the system, it can be shown that
	\begin{align}
		x_{t+1-l} = \begin{bmatrix} M_1 \ \mathcal{O}_l^{-1} \end{bmatrix} \xi_{t+1},
	\end{align}
	where $\mathcal{O}_l^{-1}$ denotes the left-inverse of the observability matrix $\mathcal{O}_l$ in~\eqref{eq:obs_matrix}, and the matrix $M_1$ contains the system matrices $A,B,C,D$. Using the dynamics of the extended system \eqref{eq:xiDynamics}, it can be shown that
	\begin{align}
	\label{eq:app_x_t_l}
		x_{t+1-l} = \begin{bmatrix} M_1 \ \mathcal{O}_l^{-1} \end{bmatrix} (\tilde{A} \tilde{\xi}_t - \tilde{A} \begin{bmatrix}
			0_{lm \times 1}\\
			\varepsilon_{[t-l,t-1]}
		\end{bmatrix}
		+ \tilde{B} \bar{u}_0^*(t) ).
	\end{align}
	Since we assume $J_L^*(\tilde{\xi}_t)\leq Y_L(\tilde{\xi}_t) \leq \bar{Y}$, it follows that 
	\begin{align}
	\label{eq:app_u_opt_bound}
		\| \bar{u}_{[-l,L-1]}^*(t) \|_2^2 \leq \frac{\bar{Y}}{\lambda_{\min}(R)}.
	\end{align}
	 Applying~\eqref{eq:app_u_opt_bound} and the lower bound in~\eqref{eq:PSTBoundOnYL} to~\eqref{eq:app_x_t_l}, we obtain the following bound
	\begin{align}
	\label{eq:app_x_t_l_2}
			&\| x_{t+1-l} \|_2 \\\nonumber
			\leq &\underbrace{c_\xi \bigg(  \|\tilde{A} \|_2\left(\dfrac{\sqrt{\bar{Y}}}{\sqrt{\epsilon_{\mathrm{o}}}} + \sqrt{lp} \bar{\varepsilon}\right)
			+ \| \tilde{B} \|_2 \frac{\sqrt{\bar{Y}}}{\sqrt{\lambda_{\min}(R)}} \bigg)}_{\eqqcolon c_x}
	\end{align}
	with $c_\xi \coloneqq \| \begin{bmatrix} M_1 \ \mathcal{O}_l^{-1} \end{bmatrix} \|_2$.
	Therefore, the norm of the candidate $\bar{\alpha}(t+1)$ can be bounded as
	\begin{align}
	\label{eq:bound_alpha_cand}	
				\| \bar{\alpha}(t+1) \|_2^2 \stackrel{\eqref{eq:NewCandidateAlpha}}{\leq}& \| H_{ux}^\dagger \|_2^2 \left( \| \bar{u}_{[-l+1,L-1]}^*(t) \|_2^2 + \| x_{t+1-l} \|_2^2 \right) \nonumber\\
			\stackrel{\eqref{eq:app_u_opt_bound},\eqref{eq:app_x_t_l_2}}{\leq}& \underbrace{\| H_{ux}^\dagger \|_2^2 \left( \frac{\bar{Y}}{\lambda_{\min}(R)} + c_x^2 \right)}_{\eqqcolon c_\alpha^2}.
	\end{align}
Using properties of the matrix norm (cf.~\cite{MatrixComputations}), we have
	\begin{align}
		\| H_{l+L-1} (\varepsilon^\mathrm{d}) \|_2 \leq \overline{\epsilon}\underbrace{\sqrt{(l+L-1)p}\sqrt{(N-L-l+2)p}}_{=:b_H}.
		\end{align}
Thus, the candidate slack variable $\bar{\sigma}(t+1)$ in \eqref{eq:NewSlackCandidate} satisfies the following bound
	\begin{align}
	\label{eq:bound_sigma_cand}
			\| \bar{\sigma}(t+1) \|_2 \overset{\eqref{eq:NewSlackCandidate}}{\leq}& \| H_{l+L-1} (\varepsilon^\mathrm{d}) \|_2 \| \bar{\alpha}(t+1) \|_2 + \| \varepsilon_{[t+1-l,t]} \|_2 \nonumber\\
			\leq& \bar{\varepsilon} \underbrace{\left( b_H c_\alpha +\sqrt{ lp} \right)}_{\eqqcolon b_\sigma}.
	\end{align}
Finally, using the fact that the candidate trajectory is a feasible solution to~\eqref{eq:robustDDUNCMPC}, we can upper bound the value function using
	\begin{align}
			J_{L-1}^*(\tilde{\xi}_{t+1}) \leq& \sum_{k = 0}^{L-2} \| \bar{u}_k(t+1) \|_R^2 + \| \bar{y}_k(t+1) \|_Q^2\\\nonumber
			&+ \lambda_\alpha \bar{\varepsilon} \| \bar{\alpha}(t+1) \|_2^2 + \lambda_\sigma \frac{1}{\bar{\varepsilon}} \| \bar{\sigma}(t+1) \|_2^2\\\nonumber
			\overset{\eqref{eq:NewCandInput},\eqref{eq:def_delta_y},\eqref{eq:bound_alpha_cand},\eqref{eq:bound_sigma_cand}}{\leq}& \underbrace{\sum_{k = 0}^{L-2} \| \bar{u}_{k+1}^*(t) \|_R^2 + \| \bar{y}_{k+1}^*(t) \|_Q^2}_{= J_L^*(\tilde{\xi}_t) - \ell(\bar{u}_0^*(t), \bar{y}_0^*(t))}\\\nonumber
			&+ 2 \sum_{k = 0}^{L-2} \| \bar{y}_{k+1}^*(t) \|_Q \| \delta_{k}^y(t+1) \|_Q\\\nonumber
			&+ \sum_{k = 0}^{L-2} \| \delta_{k}^y(t+1) \|_Q^2+ \lambda_\alpha \bar{\varepsilon} c_\alpha^2 + \lambda_\sigma \bar{\varepsilon} b_\sigma^2\\\nonumber
			\overset{\eqref{eq:DeltaYBound}, \eqref{eq:DeltaYSquaredBound}}{\leq}& J_L^*(\tilde{\xi}_t) - \ell(\bar{u}_0^*(t), \bar{y}_0^*(t)) + \alpha_1(\bar{\varepsilon}),
	\end{align}
	with
	\begin{align}
			\alpha_1(\bar{\varepsilon}) \coloneqq& \bar{\varepsilon} d_2 + b_\xi(\bar{\varepsilon})^2 d_3 
			 + \lambda_\alpha \bar{\varepsilon} c_\alpha^2 + \lambda_\sigma \bar{\varepsilon} b_\sigma^2\\
			& + 2 \sqrt{\bar{Y}} b_\xi(\bar{\varepsilon}) c_1 \sum_{k=0}^{L-2}\rho_1^k \nonumber
			+ 2 \sqrt{\bar{Y}} \sqrt{\bar{\varepsilon}} (L-1) d_1. \nonumber
	\end{align}
	We conclude that for any $\tilde{\xi}_t$ and any constant $\bar{Y} \in \R_{>0}$ with $Y_L(\tilde{\xi}_t) = J_L^*(\tilde{\xi}_t) + W(\tilde{\xi}_t) \leq \bar{Y}$, the value function $J_{L}^*(\tilde{\xi}_t)$ satisfies
	\begin{align} \label{eq:ContLikeOfValue}
		J_{L-1}^*(\tilde{\xi}_{t+1}) \leq J_L^*(\tilde{\xi}_t) - \ell(\bar{u}_0^*(t),\bar{y}_0^*(t)) + \alpha_1(\bar{\varepsilon})
	\end{align}
	with a function $\alpha_1 \in \mathcal{K}_\infty$.\\
	\textbf{Part IV}	
	Since $Y_L(\tilde{\xi}_t) \leq \bar{Y}$, it follows by optimality and the IOSS property \eqref{eq:IOSS} that the nominal prediction satisfies
	\begin{align}
		Y_{L-1}(\hat{\xi}_1^*(t)) \leq Y_L(\tilde{\xi}_t) - \epsilon_{\mathrm{o}} \| \tilde{\xi}(t) \|_2^2 \leq \bar{Y}.
	\end{align}
Given that $Y_L = J_L^*+W$ with $J_L^*$ being nonnegative, we have $W(\hat{\xi}_1^*(t)) \leq \bar{Y}$.
Using the quadratic nature of $W$ (cf.~\eqref{eq:IOSS}), $W\leq \overline{Y}$, and the one-step prediction error bound~\eqref{eq:OneStepPredictionErrorNormBound}, we get
	\begin{align} \label{eq:ContLikeOfDissip}
		W(\tilde{\xi}_{t+1}) \leq W(\bar{\xi}_1^*(t)) + \alpha_2(\bar{\varepsilon}),
	\end{align}
	where $\alpha_2$ is a class $\mathcal{K}_\infty$-function given by
	\begin{align}\label{eq:IOSS_continuity_alpha}
		\alpha_2(\bar{\varepsilon}) \coloneqq& \lambda_{\max}(P_{\mathrm{o}})(b_\xi(\bar{\varepsilon})+\bar{\varepsilon}\sqrt{pl})^2\\
		& + 2 \sqrt{\lambda_{\max}(P_{\mathrm{o}})} \sqrt{\bar{Y}} (b_\xi(\bar{\varepsilon})+\bar{\varepsilon}\sqrt{pl}).\nonumber
	\end{align}
	\textbf{Part V}
	Taking the sum of the bounds \eqref{eq:ContLikeOfValue} and \eqref{eq:ContLikeOfDissip} directly yields the desired inequality~\eqref{eq:ContLikePropOfLyap} with $\alpha_3(\bar{\varepsilon}) \coloneqq \alpha_1(\bar{\varepsilon}) + \alpha_2(\bar{\varepsilon})$.
$\hfill\square$

\begin{IEEEbiography}[{\includegraphics[width=1in,height=1.25in,clip,keepaspectratio]{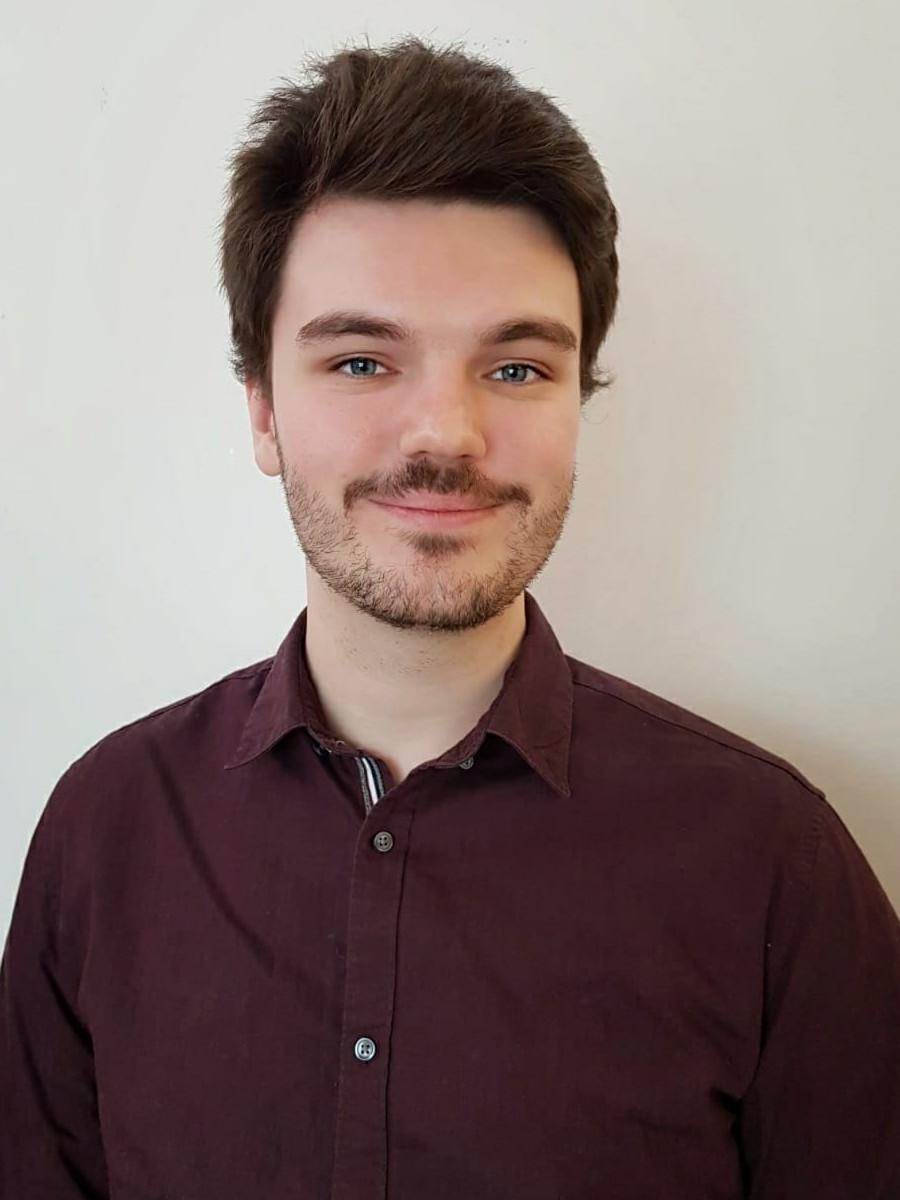}}]{Joscha Bongard}
received his Master degree in Engineering Cybernetics from the University of Stuttgart, Germany, in 2021. 
He is currently pursuing his doctoral studies at the Technical University of Munich (TUM), Germany, at the Chair of Automatic Control.
His research interests include the application of MPC to autonomous driving.
\end{IEEEbiography}

\begin{IEEEbiography}[{\includegraphics[width=1in,height=1.25in,clip,keepaspectratio]{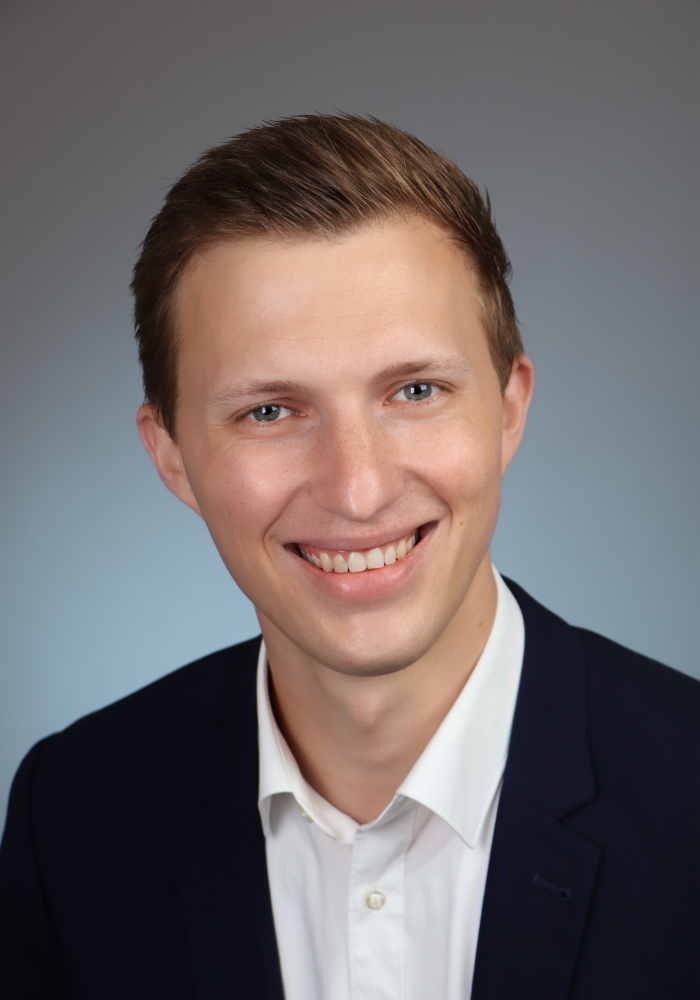}}]{Julian Berberich}
received the Master’s degree in Engineering Cybernetics from the University of Stuttgart, Germany, in 2018. Since 2018, he has been a Ph.D. student at the Institute for Systems Theory and Automatic Control under supervision of Prof. Frank Allg\"ower and a member of the International Max-Planck Research School (IMPRS) at the University of Stuttgart. He has received the Outstanding Student Paper Award at the 59th Conference on Decision and Control in 2020. His research interests are in the area of data-driven analysis and control.
\end{IEEEbiography}

\begin{IEEEbiography}[{\includegraphics[width=1in,height=1.25in,clip,keepaspectratio]{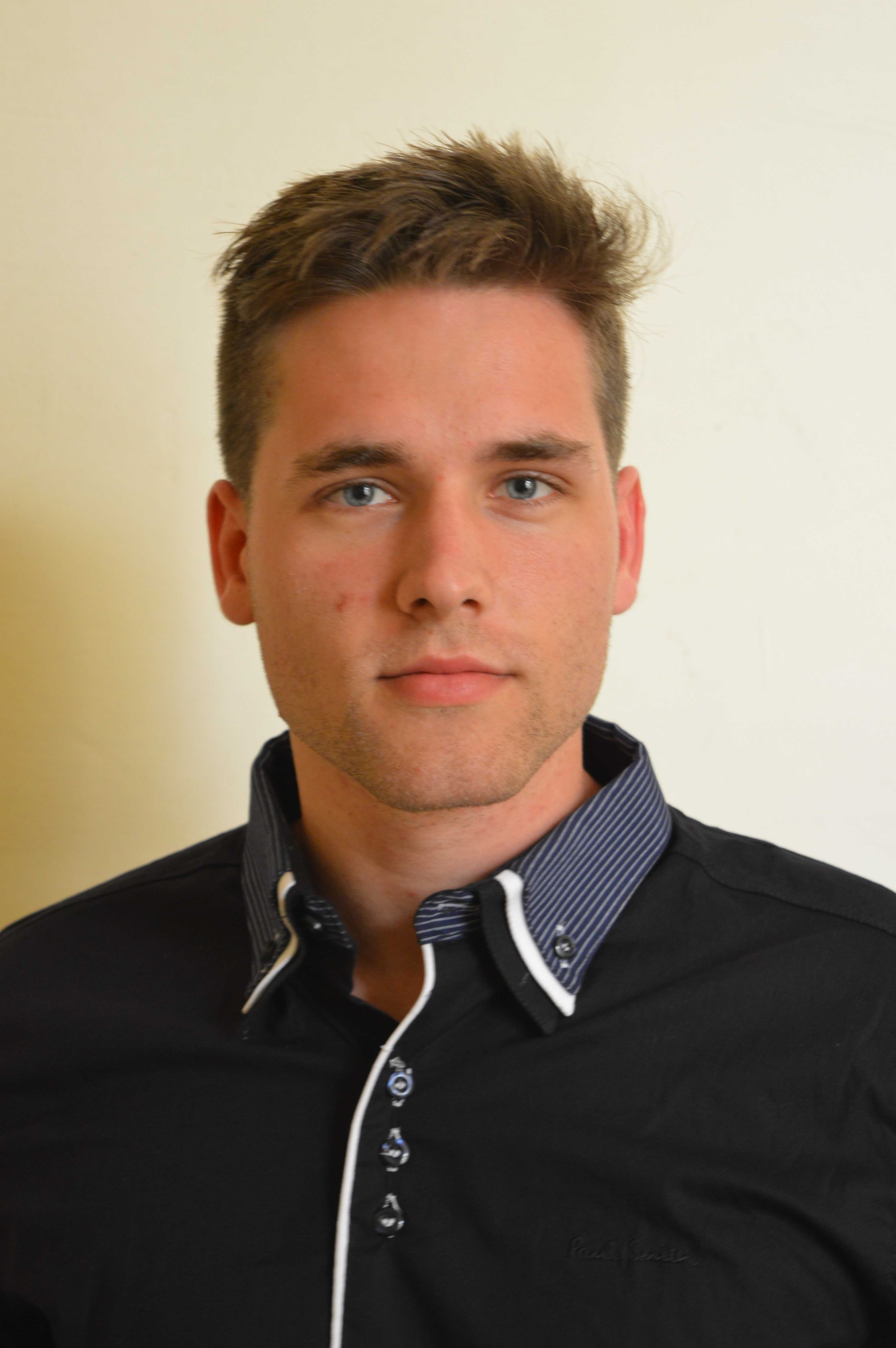}}]{Johannes K\"ohler}
received his Master degree in Engineering Cybernetics from the University of Stuttgart, Germany, in 2017. 
In 2021, he obtained a Ph.D. in mechanical engineering, also from the University of Stuttgart,
Germany.
He is currently a postdoctoral researcher at the Institute for Dynamic Systems and Control (IDSC) at ETH Zürich.
His research interests are in the area of model predictive control and control and estimation for nonlinear uncertain systems. 
 \end{IEEEbiography}

\begin{IEEEbiography}[{\includegraphics[width=1in,height=1.25in,clip,keepaspectratio]{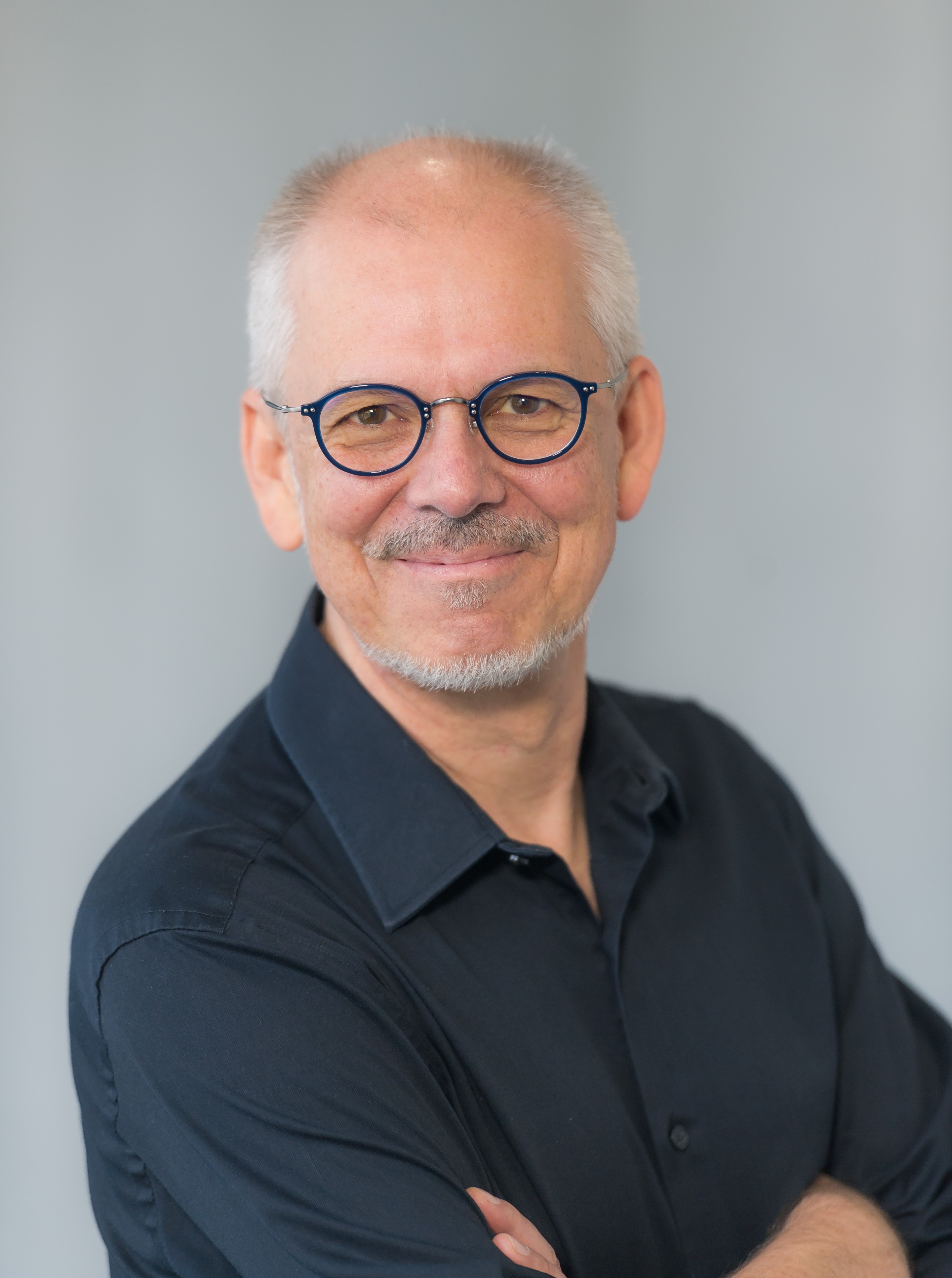}}]{Frank Allg\"ower}
is professor of mechanical engineering at the University of Stuttgart, Germany, and Director of the Institute for Systems Theory and Automatic Control (IST) there.\\ 
Frank is active in serving the community in several roles: Among others he has been President of the International Federation of Automatic Control (IFAC) for the years 2017-2020, Vice-president for Technical Activities of the IEEE Control Systems Society for 2013/14, and Editor of the journal Automatica from 2001 until 2015. From 2012 until 2020 Frank served in addition as Vice-president for the German Research Foundation (DFG), which is Germany’s most important research funding organization. \\
His research interests include predictive control, data-based control, networked control, cooperative control, and nonlinear control with application to a wide range of fields including systems biology.
\end{IEEEbiography}

\end{document}